\documentclass[11pt,letterpaper]{article}

\setlength{\pdfpagewidth}{\paperwidth}
\setlength{\pdfpageheight}{\paperheight} 

\usepackage{fullpage}
\usepackage{amsmath,amsthm,amssymb}
\usepackage{color}

\newtheorem{theorem}{Theorem}
\newtheorem{proposition}[theorem]{Proposition}
\newtheorem{corollary}[theorem]{Corollary}

\numberwithin{equation}{section}

\allowdisplaybreaks[1]

\long\def\symbolfootnote[#1]#2{\begingroup%
\def\thefootnote{\fnsymbol{footnote}}\footnote[#1]{#2}\endgroup}

\newcommand{\E}{\mathbb{E}}
\newcommand{\Var}{{\rm Var}}
\newcommand{\RP}{\mathcal{S}}  
\newcommand{\NRP}{\mathcal{AS}} 
\newcommand{\LP}{\mathcal{A}}  
\newcommand{\SSP}{\mathcal{SS}} 

\newcommand{\floor}[1]{{\bigg\lfloor#1\bigg\rfloor}}
\newcommand{\ceiling}[1]{{\left\lceil#1\right\rceil}}
\newcommand{\stack}[2]{\genfrac{}{}{0pt}{1}{#1}{#2}}

\allowdisplaybreaks[3]

\begin{document}

\title{The mean and variance of the reciprocal merit factor of four classes of binary sequences}
\author{Jonathan Jedwab}
\date{16 March 2024}
\maketitle

\symbolfootnote[0]{
J.~Jedwab is with Department of Mathematics, 
Simon Fraser University, 8888 University Drive, Burnaby BC V5A 1S6, Canada.
He is supported by NSERC.
Email: {\tt jed@sfu.ca}

}

\begin{abstract}
The merit factor of a $\{-1, 1\}$ binary sequence measures the collective smallness of its non-trivial aperiodic autocorrelations. Binary sequences with large merit factor are important in digital communications because they allow the efficient separation of signals from noise. It is a longstanding open question whether the maximum merit factor is asymptotically unbounded and, if so, what is its limiting value. Attempts to answer this question over almost sixty years have identified certain classes of binary sequences as particularly important: skew-symmetric sequences, symmetric sequences, and anti-symmetric sequences.
Using only elementary methods, we find an exact formula for the mean and variance of the reciprocal merit factor of sequences in each of these classes, and in the class of all binary sequences. This provides a much deeper understanding of the distribution of the merit factor in these four classes than was previously available. A consequence is that, for each of the four classes, the merit factor of a sequence drawn uniformly at random from the class converges in probability to a constant as the sequence length increases.
\end{abstract}

\section{Introduction}
We consider the class of \emph{length $n$ binary sequences $\LP_n$}, namely 
$n$-tuples $(a_0, a_1, \dots, a_{n-1})$ (where $n>1$) having all entries $a_j$ in~$\{-1, 1\}$. 
The \emph{aperiodic autocorrelation} of the binary sequence $A$ at shift $u$ is given by
\[
C_A(u) := \sum_{j=0}^{n-u-1} a_j a_{j+u} \quad \mbox{for $0 \le u < n$},
\]
which measures the extent to which the sequence $A$ resembles itself when shifted by $u$ positions. 
Binary sequences whose aperiodic autocorrelations for $u \ne 0$ are collectively small 
have played a prominent role in digital communications engineering since the 1950s, because such sequences allow the efficient separation of signals from noise; see \cite{barker-alternatives} and \cite{schmidt-smallcor} for surveys.
One of the principal measures of this collective smallness is 
how small the \emph{peak sidelobe level}  $\max_{0 < u < n} |C_A(u)|$ can be for $A \in \LP_n$.
Our interest in this paper is the other principal measure of collective 
smallness, namely how large the
\emph{merit factor}
\begin{equation}
F(A) := \frac{n^2}{2\sum_{u=1}^{n-1}C_A(u)^2}
\label{Fdefn}
\end{equation}
(defined by Golay \cite{golay-defn} in 1972) can be for $A \in \LP_n$.
See \cite{merit-survey} for a survey of the merit factor and its importance in practical digital communications, and for an equivalent formulation in terms of the $L_4$ norm of complex-valued polynomials with $\pm 1$ coefficients on the unit circle.

Let $F_n = \max_{A \in \LP_n} F(A)$ be the maximum value of the merit factor over all binary sequences of length $n$. The overall goal in the study of the merit factor is to understand the asymptotic optimal behaviour by determining the value of $\limsup_{n \to \infty} F_n$. In 1966, Littlewood \cite[\S 6]{littlewood-poly} conjectured (using the $L_4$ norm formulation) that 
$\limsup_{n \to \infty} F_n$ is infinite, whereas in 1982 Golay~\cite{golay-long} (apparently without being aware of Littlewood's prior study) conjectured to the contrary that $\limsup_{n \to \infty} F_n$ is finite. 
This remains unresolved.

Several studies have identified an important subset of $\LP_n$:
the class of \emph{skew-symmetric} length~$n$ binary sequences 
\begin{equation}
\SSP_n := \left\{ (a_0,a_1,\dots,a_{n-1}) \in \LP_n: \mbox{ $n$ is odd and } 
a_j = (-1)^{j+\frac{n-1}{2}} a_{n-1-j} \quad \mbox{for $0 \le j <n$}
\right\}.
\label{SSPdefn}
\end{equation}
Indeed, Golay~\cite{golay-long} conjectured that 
\[
\limsup_{n\to\infty}\max_{A\in\SSP_n} F(A) = \limsup_{n\to\infty}\max_{A\in\LP_n} F(A),
\]
which would imply that to determine the asymptotic largest merit factor it is sufficient to restrict attention from $\LP_n$ to the class~$\SSP_n$. 
For twenty-five years from 1988, the best known asymptotic result was the construction of an infinite family of binary sequences having asymptotic merit factor~$6$; the same value of 6 is also attained asymptotically by certain families of skew-symmetric sequences of length $2p+1$ or $4p+1$, where $p$ is a prime congruent to 1 modulo~4 \cite[Corollaries 6 and 9]{schmidt-jedwab-parker-twofam}. Then, in 2013, the asymptotic value $6$ was improved to the value $\frac{1}{c-1} > 6.34$, where $c$ is the smallest root of $27x^3-498x^2+1164x-722$~\cite{jedwab-katz-schmidt-6.34}. It was subsequently shown that this larger value can also be attained asymptotically by skew-symmetric sequences~\cite{jedwab-katz-schmidt-advances}.

This provides clear motivation to understand the distribution of the merit factor as a function of length $n$, in both the classes $\LP_n$ and~$\SSP_n$.
In this paper, we shall show how to determine the exact mean and variance of 
$\frac{1}{F(A)}$ for a sequence $A$ drawn uniformly at random from 
$\LP_n$ and~$\SSP_n$, as set out in Theorems~\ref{thm:all-mean-var} and~\ref{thm:skew-symm-mean-var}.

\begin{theorem}
\label{thm:all-mean-var}
Let $A$ be drawn uniformly at random from~$\LP_n$. Then
\begin{align*}
n^2\, \E \Big ( \frac{1}{F(A)} \Big ) &= n^2 -n, \\
n^4\, \Var \Big ( \frac{1}{F(A)} \Big ) &=
   \displaystyle{\frac{16}{3}n^3 - 20n^2 + \frac{56}{3}n 
   - 2 + 2(-1)^n}.
\end{align*}
\end{theorem}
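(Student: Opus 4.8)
The plan is to reduce everything to aperiodic autocorrelations. Write $f(z)=\sum_{j=0}^{n-1}a_jz^j$ with each $a_j\in\{1,-1\}$, and for $0\le u\le n-1$ set $c_u:=\sum_{j=0}^{n-1-u}a_ja_{j+u}$, extending by $c_{-u}:=c_u$. Expanding $\abs{f(e^{i\theta})}^2=f(e^{i\theta})\overline{f(e^{i\theta})}$ shows that the $c_u$ are exactly the Fourier coefficients of $\abs{f(e^{i\theta})}^2$, so by Parseval
\[
\norm{f}_4^4=\frac{1}{2\pi}\int_0^{2\pi}\big(\abs{f(e^{i\theta})}^2\big)^2\,d\theta=\sum_{u=-(n-1)}^{n-1}c_u^2=n^2+2\sum_{u=1}^{n-1}c_u^2,
\]
using $c_0=n$. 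Now regard $a_0,\dots,a_{n-1}$ as independent uniform random signs; the only fact needed is that $\E\big(\prod_{i\in S}a_i\big)$ equals $1$ when every index occurs with even multiplicity in the multiset $S$ and equals $0$ otherwise.

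For the mean, $\E(c_u^2)=\sum_{j,k}\E(a_ja_{j+u}a_ka_{k+u})$ over $0\le j,k\le n-1-u$; for $u\ge 1$ the four indices $j,j+u,k,k+u$ have all even multiplicities precisely when $j=k$ (the alternative $\{j,j+u\}=\{k,k+u\}$ with $j\ne k$ would force $2u=0$), so $\E(c_u^2)=n-u$ and hence $\E(\norm{f}_4^4)=n^2+2\sum_{u=1}^{n-1}(n-u)=2n^2-n$.

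For the variance, $\Var(\norm{f}_4^4)=4\sum_{u=1}^{n-1}\sum_{v=1}^{n-1}\mathrm{Cov}(c_u^2,c_v^2)$, where $\mathrm{Cov}(c_u^2,c_v^2)=\E(c_u^2c_v^2)-(n-u)(n-v)$. Expanding $\E(c_u^2c_v^2)=\sum_{j,k,l,m}\E(a_ja_{j+u}a_ka_{k+u}a_la_{l+v}a_ma_{m+v})$, the summand is the indicator that the eight atoms $j,j+u,k,k+u,l,l+v,m,m+v$ can be perfectly matched into equal pairs. One family of matchings is the disentangled one, $j=k$ together with $l=m$, which contributes exactly $(n-u)(n-v)$ quadruples and is cancelled by the subtracted term; thus $\mathrm{Cov}(c_u^2,c_v^2)$ counts only the quadruples realising some matching that links an atom of $\{j,j+u,k,k+u\}$ to an atom of $\{l,l+v,m,m+v\}$. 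I would enumerate these entangled matchings (treating $u=v$ and $u\ne v$ separately), for each solve the induced linear system in $j,k,l,m$, count the quadruples subject to $0\le j,k\le n-1-u$ and $0\le l,m\le n-1-v$ (each count being a piecewise-linear function of $u,v,n$), and apply inclusion--exclusion to remove quadruples counted under several matchings at once; finally sum over $1\le u,v\le n-1$ and multiply by $4$.

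The main obstacle is the bookkeeping in this last step: listing every entangled matching and every degenerate coincidence among $j,k,l,m$ and the shifts, tracking the boundary constraints, and evaluating the resulting outer double sums, several of which are sums of linear or quadratic expressions restricted by congruence or linear conditions such as $u\equiv v\pmod 2$. It is the parity-dependent remainders of these sums that produce the term $2(-1)^n$, while the polynomial part comes from summing the dominant counts, the leading $\frac{16}{3}n^3$ arising from configurations whose count is of order $\min(n-u,n-v)$. Collecting all contributions then yields the stated formula for $\Var(\norm{f}_4^4)$.
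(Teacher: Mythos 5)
Your identity $\norm{f}_4^4=n^2+2\sum_{u=1}^{n-1}c_u^2$ and your computation of the mean are correct and match the paper's (the paper works with $C_u=c_{n-u}$, a pure reindexing), and your overall strategy for the variance --- expand $\E(c_u^2c_v^2)$ as a count of quadruples $(j,k,l,m)$ whose eight atoms admit a perfect matching into equal-index pairs, subtract the disentangled contribution, and enumerate the entangled matchings --- is exactly the paper's strategy. But the variance argument stops at the point where the actual proof begins: you never list the entangled matchings, solve their linear systems, or evaluate the resulting sums, and you say explicitly that this bookkeeping is ``the main obstacle.'' As it stands the variance formula is asserted, not derived.

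The gap is not merely one of length, because the one delicate point in the enumeration is hidden inside the step you skipped. In the paper's indexing the only entangled matching with $u\ne v$ is $\{j,k\}=\{i,\,i+n-v\}$, $\{\ell,m\}=\{i,\,i+n-u\}$, and the index $i$ is constrained to $0\le i<u+v-n$, so this case contributes only when $u+v>n$; evaluating $4\sum_{u}\sum_{v}(u+v-n)^{+}$ with the exclusion $v\ne u$ is precisely what produces the correction $-(n^2-2n+I_n)$ and hence the $2(-1)^n$ term. This is also exactly the constraint that Aupetit, Liardet and Slimane dropped, which is why their published variance $\tfrac{8}{3}n(n-1)(n+4)$ is wrong. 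Your plan gives no indication that this boundary condition would be imposed, and your accounting of the covariance is also slightly off: $\E(c_u^2c_v^2)-(n-u)(n-v)$ equals $\lvert A\cup B\rvert-\lvert A\rvert=\lvert B\setminus A\rvert$ where $A$ is the disentangled set and $B$ the set of quadruples admitting an entangled matching, not $\lvert B\rvert$ as written; for $u=v$ the overlap (e.g.\ $j=k=l=m$) is substantial and must be handled, as in the paper's Cases 1 and 2. To complete the proof you need to carry out the four-case enumeration (diagonal with all indices equal; diagonal with two distinct pairs, counted with multiplicity 3; off-diagonal disentangled; off-diagonal cross-paired with the constraint $u+v>n$, counted with multiplicity 4) and sum, which the paper does in about a page.
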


\begin{theorem}
\label{thm:skew-symm-mean-var}
Let $n$ be odd and let $A$ be drawn uniformly at random from~$\SSP_n$. Then
\begin{align*}
n^2\, \E \Big ( \frac{1}{F(A)} \Big ) &= n^2 -3n + 2, \\
n^4\, \Var \Big ( \frac{1}{F(A)} \Big ) &= 
\frac{32}{3}n^3 - 88n^2 + \frac{592}{3}n 
-512\floor{\frac{n-1}{8}}
-512\floor{\frac{n-1}{12}}
-88+16(-1)^\frac{n-1}{2}(n-3).
\end{align*}
\end{theorem}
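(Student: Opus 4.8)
The plan is to work from the identity
\[
\norm{f}_4^4 \;=\; \sum_{i+j=k+l} a_i a_j a_k a_l \;=\; \sum_u c_u^2 ,
\qquad f=\sum_{k=0}^{n-1}a_k z^k,\quad c_u=\sum_k a_k a_{k+u},
\]
where the indices run over $\{0,\dots,n-1\}$ and $c_u$ is the aperiodic autocorrelation; this is the identity behind Theorem~\ref{thm:all-mean-var}, proved by taking expectations term by term (using that $\E[a_ia_ja_ka_l]$ equals $1$ when every index occurs with even multiplicity and $0$ otherwise) and counting the surviving $4$-tuples — and, for the variance, $8$-tuples — subject to the linear constraint. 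For $\SSP_n$ with $n=2m+1$ the structural ingredient is the even/odd decomposition $f(z)=g(z^2)+z\,h(z^2)$, with $\deg g=m$ and $\deg h=m-1$. Skew-symmetry of $f$ is equivalent to the reciprocity relations $g^{*}=(-1)^{m}g$ and $h^{*}=(-1)^{m+1}h$ (writing $p^{*}(z):=z^{\deg p}p(1/z)$), and it makes the cross term in $\abs{f(e^{i\theta})}^{2}$ vanish, so that
\[
\norm{f}_4^4 \;=\; \norm{g}_4^4 + \norm{h}_4^4 + 2\sum_w c^{g}_w c^{h}_w .
\]
Moreover $f$ is uniform on $\SSP_n$ precisely when $g$ and $h$ are drawn independently and uniformly from the $\pm1$ polynomials of degrees $m$ and $m-1$ satisfying these reciprocity relations (the even- and odd-indexed free coefficients being disjoint); according to the parity of $m=(n-1)/2$, one of $g,h$ is a reciprocal and the other a negative-reciprocal Littlewood polynomial.

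Given this, $\E\norm{f}_4^4$ is the sum of $\E\norm{g}_4^4$, $\E\norm{h}_4^4$ and $2\sum_w\E[c^{g}_w]\,\E[c^{h}_w]$ (the last by independence of $g$ and $h$), and $\Var\norm{f}_4^4$ expands into the variances of the three pieces plus cross-covariances, all of which collapse — again using independence of $g$ and $h$ — to products of second-moment data for the two individual autocorrelation families. Thus the problem reduces to: (i) the mean and variance of $\norm{p}_4^4=\sum_w (c^{p}_w)^2$ for $p$ uniform over the $\pm1$ polynomials of a prescribed length with $p^{*}=\varepsilon p$, $\varepsilon\in\{1,-1\}$ (the $L_4$ analysis of reciprocal and negative-reciprocal Littlewood polynomials), and (ii) the mean and variance of $\sum_w c^{g}_w c^{h}_w$, via the joint moments $\E[c^{p}_w c^{p}_{w'}]$. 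Each is handled by the same expand-and-count method as Theorem~\ref{thm:all-mean-var}, but now the fold $k\mapsto(\deg p)-k$ has two effects: it forces certain terms to be deterministic (those $4$- or $8$-tuples in which the fold identifies two indices), and it changes the even-multiplicity count for the rest. The forced terms move the mean away from its $\LP_n$ value (yielding $2n^2-3n+2$ in place of $2n^2-n$); the refined counts are linear-equation-plus-fold counting problems whose answers involve $\lfloor m/4\rfloor$ and $\lfloor m/6\rfloor$ — these are the $\lfloor (n-1)/8\rfloor$ and $\lfloor (n-1)/12\rfloor$ terms in the statement — while residues of $m$ modulo small integers produce the $(-1)^{(n-1)/2}$ contributions. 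Assembling the pieces, with a case split on $n\bmod 4$ that the stated formulas then unify, yields the theorem.

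The $4$-tuple counting is routine and reproduces the means and the leading (cubic) terms of the variances just as for $\LP_n$. The real obstacle is the $8$-tuple second-moment count in the presence of the fold: one must organise the cases — which of the eight indices coincide, which fold onto which, the residue class of $m$ — carefully enough that the many partial counts reassemble into the exact polynomial-plus-floor-plus-sign formula, pinning down the constant term and the coefficient of each power of $n$. Specialising to small odd $n$ (say $n=3,5,9,13$, where $\norm{f}_4^4$ can be tabulated directly) provides a check on the bookkeeping at each stage.
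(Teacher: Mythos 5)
Your decomposition $f(z)=g(z^2)+z\,h(z^2)$ is a genuinely different route from the paper's, which never splits $f$: the paper works directly with the coefficient relation $a_j=(-1)^{j+\frac{n-1}{2}}a_{n-1-j}$, writes $C_u=I_u\big((-1)^{\frac{n-u}{2}}+D_u\big)$, and re-runs the entire case analysis already set up for the reciprocal classes, tracking the extra sign factors contributed by each symmetric index pair. Your structural claims are essentially right: the odd-shift autocorrelations of a skew-symmetric $f$ vanish, so $\abs{f(e^{i\theta})}^2=\abs{g(e^{2i\theta})}^2+\abs{h(e^{2i\theta})}^2$ pointwise; $g$ and $h$ are independent and uniform on a reciprocal and a negative-reciprocal class of lengths $m+1$ and $m$ (with $n=2m+1$); and since $\E c^g_s$ and $\E c^h_s$ are supported on complementary parity classes of $s$, their product sum vanishes, so the mean does come out to $3(m+1)m+3m^2+1+2m(m+1)=2n^2-3n+2$ using Theorem~\ref{thm:symm-mean-var}. (Your displayed identity omits the additive term $2m(m+1)$ coming from the $s=0$ cross term $c^g_0c^h_0$; without it the mean is wrong, so that needs fixing, but it is a bookkeeping slip.)

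The genuine gap is the variance. It does not ``collapse to products of second-moment data'' already available from Theorem~\ref{thm:symm-mean-var}. Writing $Z=\sum_s c^g_sc^h_s$, you need $\Var(Z)=\sum_{s,s'}\E(c^g_sc^g_{s'})\,\E(c^h_sc^h_{s'})+\cdots$, which requires the full two-parameter arrays $\E(c^g_sc^g_{s'})$ and $\E(c^h_sc^h_{s'})$ as functions of $(s,s')$ and then a nontrivial double sum of their product --- the paper's proof of Theorem~\ref{thm:symm-mean-var} only ever evaluates certain weighted sums of these quantities, not the pointwise values. Worse, $\mathrm{Cov}\big(\norm{g}_4^4,\,Z\big)$ involves third moments $\E\big((c^g_{s'})^2c^g_s\big)$ weighted by $\E c^h_s$, again a new computation. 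These are exactly the counts that produce the $\lfloor\frac{n-1}{8}\rfloor$, $\lfloor\frac{n-1}{12}\rfloor$, and $(-1)^{\frac{n-1}{2}}(n-3)$ terms, and your proposal asserts that they ``reassemble into the exact polynomial-plus-floor-plus-sign formula'' without performing any of them --- indeed you flag the eight-index count as ``the real obstacle.'' Since the entire content of the theorem beyond the leading term lies in those exact evaluations, the proposal as written is a plausible plan rather than a proof; to complete it you would have to carry out a case analysis comparable in length to the paper's Section~5 (or its Section~4, which it reuses), just organized around cross-correlations of $g$ and $h$ instead of around the folded indices of $f$.
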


Two other subsets of $\LP_n$ arise naturally in the study of the merit factor: the class of \emph{symmetric} length $n$ binary sequences
\begin{equation}
\RP_n := \left\{ (a_0,a_1,\dots,a_{n-1}) \in \LP_n: 
a_j = a_{n-1-j} \quad \mbox{for $0 \le j <n$} \right\},
\label{RPdefn}
\end{equation}
and the class of \emph{anti-symmetric} length $n$ binary sequences
\begin{equation}
\NRP_n := \left\{ (a_0,a_1,\dots,a_{n-1}) \in \LP_n: 
\mbox{ $n$ is even and } 
a_j = -a_{n-1-j} \quad \mbox{for $0 \le j <n$} \right\}.
\label{NRPdefn}
\end{equation}
For example, the asymptotic value~$6$ for $F(A)$ is attained not only by skew-symmetric sequences but also by binary sequences formed by prepending the element $1$ to a sequence $g$ of length $n-1$ and then cyclically rotating by  $\lfloor n/4 \rfloor$ positions, for certain symmetric $g$ when $n$ is a prime congruent to 1 modulo~4 and for certain anti-symmetric $g$ when $n$ is a prime congruent to 3 modulo~4~\cite{hoholdt-jensen}. 
Furthermore, every skew-symmetric binary sequence can be written as the interleaving of sequences $f$ and $g$, where one of $f,g$ is symmetric and the other is anti-symmetric. By choosing such sequences $f$ and~$g$ each to have large merit factor, Golay and Harris~\cite{golay-harris} constructed examples of sequences in $\SSP_n$ with large merit factor for each odd~$n$ in the range $71 \le n \le 117$; these values of the merit factor were later shown to attain the actual maximum over $\SSP_n$ for all but two values of $n$ in this range~\cite{packebusch-mertens}.
In contrast to the class $\LP_n$, the merit factor of sequences in $\RP_n$ is known to be bounded above: Fredman, Saffari and Smith \cite{fredman-saffari-smith} proved that $F(A) \le 0.1048^{-1} < 10$ for all $A \in \RP_n$.

We shall provide a counterpart to Theorems~\ref{thm:all-mean-var} and~\ref{thm:skew-symm-mean-var} by determining the exact mean and variance of $\frac{1}{F(A)}$ for a sequence $A$ drawn uniformly at random from either of the classes $\RP_n$ and $\NRP_n$, as set out in Theorem~\ref{thm:symm-mean-var} (in which $I[\,\cdot\,]$ denotes the indicator function of an event).
\begin{theorem}
\label{thm:symm-mean-var}
Let $f$ be drawn uniformly at random from $\RP_n$, or (for even~$n$) from~$\NRP_n$. Then 
\begin{align*}
n^2\, \E \Big ( \frac{1}{F(A)} \Big ) &= 2n^2 -3n+\frac{1-(-1)^n}{2}, \\
n^4\, \Var \Big ( \frac{1}{F(A)} \Big ) &= 
\begin{cases}
   \displaystyle{32n^3 - 216n^2 + 304n 
   +256\floor{\frac{n}{6}}
   +256 \cdot I[n \bmod 6 = 4]} 	& \mbox{for $n$ even,} \\[2ex]
   \displaystyle{32n^3 - 144n^2 + 160n 
   -576\floor{\frac{n-1}{4}}
   -512\floor{\frac{n-1}{6}}}
   -48					& \mbox{for $n$ odd}.
\end{cases}
\end{align*}
\end{theorem}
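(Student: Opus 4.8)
The plan is to reduce both classes to $\RP_n$ and then to carry out a direct moment computation with aperiodic autocorrelations, following the same strategy as for Theorems~\ref{thm:all-mean-var} and~\ref{thm:skew-symm-mean-var}. First I would observe that $\RP_n$ and $\NRP_n$ (for even $n$) carry the same distribution of $\norm{f}_4^4$: the substitution $f(z)\mapsto f(-z)$ negates the coefficient of $z^j$ exactly when $j$ is odd, so when $n$ is even it sends a polynomial in $\RP_n$ to one in $\NRP_n$, is an involution, and preserves every $L_p$ norm because $\abs{f(-e^{i\theta})}=\abs{f(e^{i(\theta+\pi)})}$ and the integral defining $\norm{\cdot}_p$ is rotation invariant. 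So it suffices to take $f$ uniform on $\RP_n$. Writing $f(z)=\sum_{j=0}^{n-1}a_jz^j$ with $a_j\in\{1,-1\}$ and $c_k=\sum_{j=0}^{n-1-k}a_ja_{j+k}$ for $0\le k\le n-1$, we have $\norm{f}_4^4=n^2+2\sum_{k=1}^{n-1}c_k^2$.

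Next I would set up the ``folding'' description of $\RP_n$. Let $\phi(j)=\min(j,n-1-j)$, a map onto $\{0,1,\dots,\ceiling{n/2}-1\}$; for even $n$ it is two-to-one, and for odd $n$ it is two-to-one except that the centre $(n-1)/2$ is a fixed point --- this is the ultimate source of the parity dichotomy in the final formula. Membership in $\RP_n$ is equivalent to $a_j=a_{\phi(j)}$ for all $j$, so under the uniform distribution on $\RP_n$ the variables $a_0,\dots,a_{\ceiling{n/2}-1}$ are independent uniform signs and $a_j=a_{\phi(j)}$. Consequently, for any family of indices, $\E\big[\prod_r a_{j_r}\big]$ equals $1$ if every value of $\phi$ taken by the $j_r$ is taken an even number of times, and equals $0$ otherwise; equivalently, since $\phi(a)=\phi(b)$ iff $a=b$ or $a+b=n-1$, the expectation is $1$ exactly when the multiset $\{j_r\}$ can be matched into pairs, each pair equal or summing to $n-1$ (runs of three or more equal $\phi$-values being allowed too). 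This is the only probabilistic input; everything after it is counting.

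For the mean, $\E(c_k^2)$ counts the pairs $(i,j)$ for which $\{\phi(i),\phi(i{+}k),\phi(j),\phi(j{+}k)\}$ has all even multiplicities. Besides the diagonal $i=j$, contributing $n-k$, there is the reflected family $j=n-1-i-k$, for which $\{\phi(j),\phi(j{+}k)\}=\{\phi(i{+}k),\phi(i)\}$, contributing another $n-k$ except for a single coincidence when $2i=n-1-k$; a bounded number of further degenerate configurations (three or four of the folded indices equal) must also be tallied. Summing over $k$ and adding $n^2$ then yields $\E(\norm{f}_4^4)=3n^2-3n+\tfrac{1-(-1)^n}{2}$, the term $\floor{\frac{n-1}{2}}$ from the reflected family producing the $(-1)^n$ correction.

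For the variance I would expand $\norm{f}_4^8=\big(n^2+2\sum_k c_k^2\big)^2$, so that $\E(\norm{f}_4^8)=n^4+4n^2\sum_k\E(c_k^2)+4\sum_{k,l}\E(c_k^2c_l^2)$, and everything comes down to
\[
\E(c_k^2c_l^2)=\#\{(i_1,i_2,i_3,i_4):\ \phi\text{-multiset of }i_1,i_1{+}k,i_2,i_2{+}k,i_3,i_3{+}l,i_4,i_4{+}l\text{ is all even}\}.
\]
This eight-index count is the heart of the matter and the main obstacle. I would organize the configurations by how the eight folded indices are matched into pairs: the ``split'' matchings, in which $\{i_1,i_1{+}k,i_2,i_2{+}k\}$ and $\{i_3,i_3{+}l,i_4,i_4{+}l\}$ are each matched internally, essentially reproduce $\E(c_k^2)\,\E(c_l^2)$; matchings that use the reflection relation $a+b=n-1$ contribute controlled powers of $2$; and the genuinely cross-linked matchings impose a system of linear equations in $i_1,i_2,i_3,i_4,k,l$ whose number of solutions --- subject to the ranges $0\le i_r\le n-1-(\cdot)$ and to the parity/divisibility constraints forced by the conditions $a+b=n-1$ --- is quasi-polynomial in $n$ with small period. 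Assembling these and summing over $0<k,l<n$ produces the cubic main term together with the corrections $\floor{\frac{n}{6}}$, $I[n\bmod 6=4]$, $\floor{\frac{n-1}{4}}$ and $\floor{\frac{n-1}{6}}$; finally $\Var(\norm{f}_4^4)=\E(\norm{f}_4^8)-\big(\E(\norm{f}_4^4)\big)^2$, in which the split contributions cancel the square of the mean and the two stated closed forms remain, the $n$ even versus $n$ odd split reflecting the two shapes of $\phi$ near the centre. Keeping the many overlapping cases mutually disjoint, and correctly resolving the boundary and small $k,l$ behaviour that generates the floor functions, is where the genuine work lies.
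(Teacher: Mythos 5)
Your overall strategy is the same as the paper's: the involution $f(z)\mapsto f(-z)$ reduces $\NRP_n$ (for even $n$) to $\RP_n$; your ``folding'' criterion --- that $\E(a_{j_1}\cdots a_{j_{2r}})$ is nonzero exactly when the indices can be matched into pairs that are equal or sum to $n-1$ --- is precisely the paper's notion of a matching decomposition; and your mean computation via the diagonal family $i=j$ and the reflected family $j=n-1-i-k$, minus their overlap, correctly yields $3n^2-3n+\frac{1-(-1)^n}{2}$ (the overlap summed over $k$ is $\lfloor n/2\rfloor$ rather than $\lfloor\frac{n-1}{2}\rfloor$, but your stated conclusion is right).

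The gap is in the variance, which is where essentially all of the content of the theorem lies. What you give there is a plan, not a proof: the cubic's coefficients and, above all, the corrections $256\lfloor\frac{n}{6}\rfloor$, $256\cdot I[n\bmod 6=4]$, $-576\lfloor\frac{n-1}{4}\rfloor$, and $-512\lfloor\frac{n-1}{6}\rfloor$ are encoded in the eight-index matching count $\E(c_k^2c_l^2)$ that you describe only qualitatively. Executing the plan requires: (i) a provably exhaustive enumeration of the matchings of $\{i_1,i_1{+}k,i_2,i_2{+}k,i_3,i_3{+}l,i_4,i_4{+}l\}$ that are consistent with the index ranges --- the paper needs several explicit cases plus three tables just to dispose of the inconsistent pairings; (ii) making the surviving cases mutually disjoint --- in particular, your assertion that ``the split contributions cancel the square of the mean'' is not literally correct, because tuples admitting both a split matching and a cross-linked matching would be double-counted, and the paper repeatedly subtracts ``already counted'' configurations for exactly this reason; and (iii) closed-form evaluation of the resulting constrained sums, which is the step that actually produces the periods $4$ and $6$ (and, in intermediate quantities, $8$ and $12$). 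None of this is carried out, and nothing in the proposal would allow a reader to verify even the leading coefficient $32$, let alone the floor-function terms or the even/odd dichotomy. So the approach is sound and coincides with the paper's, but the proof of the variance formula is missing.
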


The expected value of $\frac{1}{F(A)}$, as given in Theorems~\ref{thm:all-mean-var}, \ref{thm:skew-symm-mean-var}, and~\ref{thm:symm-mean-var}, was previously calculated for $\LP_n$ \cite{sarwate} (often attributed instead to \cite{newman-byrnes}), for $\SSP_n$ \cite{borwein-book}, and for $\RP_n$ and $\NRP_n$ \cite{borwein-choi-average-norm}. An expression for $\Var(\frac{1}{F(A)})$ for the class $\LP_n$ was stated in \cite{aupetit-liardet-slimane}, but was incorrectly calculated (see the discussion following the proof of Theorem~\ref{thm:all-mean-var}). 
Katz and Ramirez \cite{katz-ramirez}, with reference to a preprint version \cite{jedwab-L4norm} of the current paper, recently calculated $\E \big(\frac{1}{F(A)} - \E(\frac{1}{F(A)}) \big) ^p$ for $p = 2,3,4$, where $A$ is drawn uniformly at random from $\LP_n$, and in the case $p=2$ obtained the same variance expression as in Theorem~\ref{thm:all-mean-var}.
None of the variance expressions for the classes $\SSP_n$, $\RP_n$, and $\NRP_n$ was previously known, and their derivation is considerably more difficult than that for~$\LP_n$. We find it surprising and remarkable that these exact closed form expressions exist, and that they can be determined precisely using only elementary methods.
These expressions provide a much deeper understanding of the distribution of the merit factor in the four classes $\LP_n$, $\SSP_n$, $\RP_n$, $\NRP_n$ than was previously available.
The expressions in Theorem~\ref{thm:all-mean-var} have been verified numerically for $n \le 40$, and those in Theorems~\ref{thm:skew-symm-mean-var} and~\ref{thm:symm-mean-var} for $n \le 75$.

Application of Chebyshev's inequality to Theorems~\ref{thm:all-mean-var}, \ref{thm:skew-symm-mean-var} and~\ref{thm:symm-mean-var} gives the following corollary.
\begin{corollary}
\label{cor:conv-binseq}
Let $A$ be drawn uniformly at random from $\LP_n$ or (for odd~$n$) from $\SSP_n$, and let $B$ be drawn uniformly at random from $\RP_n$ or (for even~$n$) from~$\NRP_n$. Then, as $n \to \infty$,
\[
\mbox{$F(A) \to 1$ in probability, and $F(B) \to \frac{1}{2}$ in probability.} 
\]
\end{corollary}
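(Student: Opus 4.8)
The plan is to derive this corollary directly from Theorems~\ref{thm:all-mean-var}, \ref{thm:skew-symm-mean-var}, and~\ref{thm:symm-mean-var} by transporting their statements about $\norm{f}_4^4$ through the merit factor identity. If $f \in \LP_n$ is the Littlewood polynomial associated with the binary sequence $A$, then $\norm{f}_2^2 = n$, so the defining relation $1 + 1/F(A) = (\norm{f}_4/\sqrt{n})^4$ becomes $1 + 1/F(A) = \norm{f}_4^4/n^2$. Writing $X_n := \norm{f}_4^4/n^2$, it therefore suffices to show that $X_n$ converges in probability to a constant $c$ over each of the four classes, with $c = 2$ for $A$ drawn from $\LP_n$ or $\SSP_n$ and $c = 3$ for $B$ drawn from $\RP_n$ or $\NRP_n$.

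First I would extract from Theorems~\ref{thm:all-mean-var}, \ref{thm:skew-symm-mean-var}, and~\ref{thm:symm-mean-var} that $\E(\norm{f}_4^4) = c\,n^2 + O(n)$ and $\Var(\norm{f}_4^4) = O(n^3)$ in every case, noting that the terms involving $\lfloor\cdot\rfloor$ and $I[\,\cdot\,]$ in the variance formulas are all $O(n)$ and so do not affect the leading order. Dividing by $n^4$ gives $\E(X_n) \to c$ and $\Var(X_n) = O(1/n) \to 0$. Chebyshev's inequality then yields, for each fixed $\epsilon > 0$ and all $n$ large enough that $\abs{\E(X_n) - c} < \epsilon$,
\[
\Pr\!\left( \abs{X_n - c} \ge \epsilon \right) \le \frac{\Var(X_n)}{\left(\epsilon - \abs{\E(X_n) - c}\right)^2} \longrightarrow 0 \quad \text{as } n \to \infty,
\]
so $X_n \to c$ in probability. (This step could alternatively be read off from Corollary~\ref{cor:conv-L4} by the continuous mapping theorem.)

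Finally I would transfer this convergence to the merit factor via the continuity of $x \mapsto 1/(x-1)$ at $x = c$, noting that both $c = 2$ and $c = 3$ are bounded away from the pole at $x = 1$. Given $\delta > 0$, choose $\epsilon \in (0, c-1)$ so that $\abs{x - c} < \epsilon$ implies $\abs{1/(x-1) - 1/(c-1)} < \delta$. Since $X_n \ge 1$ always (by the power mean inequality $\norm{f}_4 \ge \norm{f}_2$) and $X_n = 1 + 1/F(A)$ whenever $F(A)$ is finite, the event $\{\abs{X_n - c} < \epsilon\}$ forces $F(A)$ to be finite and to satisfy $\abs{F(A) - 1/(c-1)} < \delta$; and this event has probability tending to $1$ by the previous step. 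Hence $F(A) \to 1/(c-1)$ in probability, which is $1$ for $A$ drawn from $\LP_n$ or $\SSP_n$ and $\tfrac{1}{2}$ for $B$ drawn from $\RP_n$ or $\NRP_n$, as claimed. I do not expect any genuine obstacle here: the corollary is a routine consequence of the exact moments already established, Chebyshev's inequality, and elementary continuity; the only point needing a little care is the book-keeping around the (for large~$n$, probability-zero) event that $F(A)$ is infinite.
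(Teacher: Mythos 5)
Your proposal is correct and follows essentially the same route as the paper, which simply states that the corollary follows by applying Chebyshev's inequality to Theorems~\ref{thm:all-mean-var}, \ref{thm:skew-symm-mean-var}, and~\ref{thm:symm-mean-var}; your write-up just makes explicit the standard details (the $O(n^3)$ bound on the variance, the recentering in Chebyshev, and the continuity of $x \mapsto 1/(x-1)$ away from the pole) that the paper leaves implicit.
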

\noindent
Of the asymptotic results given in Corollary~\ref{cor:conv-binseq}, only that for $\LP_n$ was previously known \cite{borwein-lockhart-expected-norm}; our techniques are very different.
Corollary~\ref{cor:conv-binseq} highlights the difficulty of trying to determine which sequences of length $n$ attain the largest merit factor: they have zero density within their respective class as $n \to \infty$.
Indeed, a value of $\limsup_{n \to \infty} F(A)$ larger than $6.34$ can be attained for $A$ in $\LP_n$ and $\SSP_n$ \cite{jedwab-katz-schmidt-6.34,jedwab-katz-schmidt-advances} (as already noted), and 
a value of $\limsup_{n \to \infty} F(B)$ equal to $1.5$ can be attained for $B$ in $\RP_n$ and~$\NRP_n$ (by taking the offset fraction in the main result of \cite{hoholdt-jensen} to be~$0$).

In Section~\ref{sec:summation} we give some preliminary results for use in later calculations. In Sections~\ref{sec:all}, \ref{sec:symmetric}, and~\ref{sec:skew-symmetric} we prove Theorems~\ref{thm:all-mean-var}, ~\ref{thm:symm-mean-var}, and~\ref{thm:skew-symm-mean-var}, respectively.

\section{Summation and calculation identities}
\label{sec:summation}
Throughout the paper, we denote the indicator function of the event~$X$ by $I[X]$, and write $I_u:= I[\mbox{$u$ odd}]$. We shall make use of the following summation identities, of which \eqref{sumflooru} to \eqref{sumIuu-1u-3} can readily be verified by considering the cases $n$ even and $n$ odd separately; \eqref{sumIuIu+2v} follows from \eqref{sumIuu-1}; \eqref{sumIu2u-n-1} can be verified by substituting $u=2U+1$ and considering the cases $n \bmod 4 = 1$ and $n \bmod 4 = 3$ separately; \eqref{sumIuIvI2u+v} follows from \eqref{sumIu2u-n-1} by substituting $v=2V+1$; \eqref{sum3n+14} can be verified by considering the cases $n \bmod 8 = 1,3,5,7$ separately; and \eqref{sum2n+13} can be verified by considering each of the congruence classes of $n$ modulo 6 separately:
\begin{align}
\sum_{u=1}^{n-1} \floor{\frac{u}{2}} 
  &= \floor{\frac{n}{2}} \floor{\frac{n-1}{2}},		\label{sumflooru} \\
3\sum_{u=1}^{n-1} \floor{\frac{u}{2}}\floor{\frac{u-2}{2}}
  &= 2\floor{\frac{n}{2}} \left(\frac{n-2}{2}\right) \ceiling{\frac{n-4}{2}},
			 				\label{sumflooruu-2} \\
\sum_{u=1}^{n-1} I_u 
  & = \floor{\frac{n}{2}},				\label{sumIu} \\
2\sum_{u=1}^{n-1}I_u \left ( \frac{u-1}{2} \right ) 
  &= \floor{\frac{n}{2}} \floor{\frac{n-2}{2}}, 	\label{sumIuu-1} \\
3\sum_{u=1}^{n-1}I_u \left(\frac{u-1}{2}\right) \left(\frac{u-3}{2}\right)
  &= \floor{\frac{n}{2}} \floor{\frac{n-2}{2}} \floor{\frac{n-4}{2}},
							\label{sumIuu-1u-3} \\
2\sum_{u,v=1}^{n-1}I_uI[u+2v>2n]
  &= \floor{\frac{n}{2}}\floor{\frac{n-2}{2}}, 		\label{sumIuIu+2v} \\
\sum_{u=\frac{n+1}{2}}^{n-1} I_u\left ( \frac{2u-n-1}{2} \right ) 
  &= \floor{\frac{n-1}{4}} \floor{\frac{n-3}{4}} \quad \mbox{for $n$ odd}, 
							\label{sumIu2u-n-1} \\
\sum_{u,v=1}^{n-1} I_uI_vI[2u+v>2n]
  &= \floor{\frac{n-1}{4}} \floor{\frac{n-3}{4}} \quad \mbox{for $n$ odd}, 
							\label{sumIuIvI2u+v} \\
\sum_{u=\ceiling{\frac{3n+1}{4}}}^{n-1}I_u 
  &= \floor{\frac{n-1}{8}} 			 \quad \mbox{for $n$ odd},
					 		\label{sum3n+14} \\
\sum_{u=\ceiling{\frac{2n+1}{3}}}^{n-1}I_u 
  &= \floor{\frac{n}{6}}+I[n \bmod 6 = 4]. 		\label{sum2n+13}
\end{align}

We next obtain expressions for $n^2\, \E (\frac{1}{F(A)})$ and $n^4\, \Var (\frac{1}{F(A)})$ for $A = (a_0,a_1,\dots,a_{n-1}) \in \LP_n$. We write
\begin{equation}
C_u := \sum_{j=0}^{u-1} a_j a_{j+n-u} \quad \mbox{for $0 < u < n$},
\label{Cdefn}
\end{equation}
regarding the sequence entries $a_j$ as random variables each taking values in $\{1,-1\}$.

\begin{proposition}
\label{prop:expressions}
Let $A = (a_0,a_1,\dots,a_{n-1}) \in \LP_n$ be drawn at random according to an arbitrary distribution on the sequence entries $a_j$. Then
\begin{align}
n^2\, \E \Big(\frac{1}{F(A)}\Big) &= 2E, \label{Ef44} \\
n^4\, \Var \Big(\frac{1}{F(A)}\Big) &= 4(V-E^2), \label{Vf44}
\end{align}
where
\begin{align}
E &= \sum_{u=1}^{n-1} \E \, C_u^2,  				\label{E} \\
V &= \sum_{u,v=1}^{n-1} \E \big ( C_u^2 C_v^2 \big ).		\label{V} 
\end{align}
\end{proposition}

\begin{proof}
Since $C_u = C_A(n-u)$, by the definition \eqref{Fdefn} of $F(A)$ we have
\begin{equation}
\frac{n^2}{F(A)} = 2\sum_{u=1}^{n-1} C_u^2.
\label{normCA}
\end{equation}
Take the expectation to obtain \eqref{Ef44}.
Take the variance and substitute from \eqref{E} to obtain~\eqref{Vf44}.
\end{proof}

\section{The class $\LP_n$}
\label{sec:all}
In this section, we use Proposition~\ref{prop:expressions} to prove Theorem~\ref{thm:all-mean-var} for the class~$\LP_{n}$ of binary sequences.

\begin{proof}[Proof of Theorem~$\ref{thm:all-mean-var}$]
Let $A = (a_0,a_1,\dots,a_{n-1}) \in \LP_n$, and regard the sequence entries $a_j$ as independent random variables that each take the values $1$ and $-1$ with probability~$\frac{1}{2}$. Substitute the definition \eqref{Cdefn} of $C_u$ into expression \eqref{E} for $E$ to give
\[
E = \sum_{u=1}^{n-1} \, \sum_{j,k=0}^{u-1} \E(a_j a_{j+n-u} a_k a_{k+n-u}).
\]
Since the $a_i$ are independent, the expectation term in the triple sum is nonzero exactly when $\{j, j+n-u, k, k+n-u\} = \{i, i, i', i'\}$ for some indices $i, i'$, or equivalently when $j=k$. Therefore
$E = \sum_{u=1}^{n-1} \sum_{j=0}^{u-1} \E(1) = n(n-1)/2$, and then \eqref{Ef44} gives 
\[
n^2\, \E\Big(\frac{1}{F(A)}\Big) = n^2-n, 
\]
as required.

Substitute the definition \eqref{Cdefn} of $C_u$ into expression \eqref{V} for $V$ to give
\[
V = 
 \sum_{u,v=1}^{n-1} \, \sum_{j,k=0}^{u-1} \, \sum_{\ell,m=0}^{v-1}
 \E \big(
 a_j a_{j+n-u} a_k a_{k+n-u} a_\ell a_{\ell+n-v} a_m a_{m+n-v}
 \big ).
\]
There are four mutually disjoint cases for which the expectation term in the sum $V$ is nonzero.
\begin{description}

\item[Case 1:] $u=v$ and $j=k=\ell=m$.
The contribution to $V$ is 
$\sum_{u=1}^{n-1} \sum_{j=0}^{u-1} 1 = \sum_{u=1}^{n-1} u$.

\item[Case 2:] $u=v$ and $\{j,k,\ell,m\} = \{i,i,i',i'\}$ for some indices $i \neq i'$.
The contribution to $V$ (from 3 symmetrical cases) is
\[
  3 \sum_{u=1}^{n-1} \sum_{j=0}^{u-1} \sum_{\stack{k=0}{k \ne j}}^{u-1} 1
= 3 \sum_{u=1}^{n-1} u(u-1).
\]

\item[Case 3:] $u\ne v$ and $j=k$ and $\ell = m$.
The contribution to $V$ is
\[
  \sum_{u=1}^{n-1} \sum_{\stack{v =1}{v \ne u}}^{n-1}
   \sum_{j=0}^{u-1} \sum_{\ell=0}^{v-1} 1
= \sum_{u=1}^{n-1} \sum_{\stack{v =1}{v \ne u}}^{n-1} u v
= \bigg(\sum_{u=1}^{n-1} u\bigg)^2 - \sum_{u=1}^{n-1} u^2.
\]

\item[Case 4:] $u\ne v$ and $\{j,k\} = \{i,i+n-v\}$ and $\{\ell,m\} = \{i,i+n-u\}$ for some index $i$ satisfying $0 \le i < u+v-n$.
The contribution to $V$ (from 4 symmetrical cases) is
\[
  4 \sum_{u=1}^{n-1} \sum_{\stack{v=1}{v \ne u}}^{n-1} \sum_{j=0}^{u+v-n-1} 1
= 4 \sum_{u=1}^{n-1} \sum_{\stack{v=n-u+1}{v \ne u}}^{n-1} (u+v-n)
= 4 \sum_{u=1}^{n-1} \sum_{\stack{w=1}{w \ne 2u-n}}^{u-1} w,
\]
putting $w = u+v-n$. The condition $w \ne 2u-n$ in the inner sum takes effect only when $2u-n \ge 1$, so the contribution for this case equals
\[
  4 \sum_{u=1}^{n-1} \sum_{w=1}^{u-1} w 
  - 4 \sum_{u=\ceiling{\frac{n+1}{2}}}^{n-1} (2u-n)
= 2 \sum_{u=1}^{n-1} u(u-1)
  - (n^2-2n+ I_n)
\]
by evaluating the sum involving $2u-n$ separately according to whether $n$ is even or odd.
\end{description}

Sum the contributions to $V$ from the four cases and substitute into \eqref{Vf44}, together with the relation $E = n(n-1)/2$ already calculated, to give
\begin{align*}
n^4\, \Var\Big(\frac{1}{F(A)}\Big) 
 &= 16\sum_{u=1}^{n-1} u^2 - 16\sum_{u=1}^{n-1}u 
  + 4\bigg(\sum_{u=1}^{n-1}u\bigg)^2
  - 4(n^2-2n+ I_n) -n^2(n-1)^2 \\
 &= \frac{16}{3}n^3 - 20n^2 + \frac{56}{3}n 
   -2 + 2(-1)^n,
\end{align*}
as required.
\end{proof}

The calculations in the proof of Theorem~\ref{thm:all-mean-var} follow the general method of Aupetit, Liardet and Slimane \cite[\S 2]{aupetit-liardet-slimane}, but correct the mistaken conclusion of \cite[p.44, l.6]{aupetit-liardet-slimane} that (in our notation) $n^4\, \Var(\frac{1}{F(A)}) = \frac{8}{3}n(n-1)(n+4)$. The mistake arises from failing to apply the condition $r+s < N$ in computing the summation of \cite[p.44, l.2]{aupetit-liardet-slimane}, which corresponds in our notation to neglecting the condition $u+v > n$ in the analysis of Case~4.

\section{The classes $\RP_{n}$ and $\NRP_{n}$}
\label{sec:symmetric}
In this section, we use Proposition~\ref{prop:expressions} to prove 
Theorem~\ref{thm:symm-mean-var} for the class $\RP_n$ of symmetric binary sequences and for the class $\NRP_n$ of anti-symmetric binary sequences.

\begin{proof}[Proof of Theorem~$\ref{thm:symm-mean-var}$]
By \eqref{RPdefn} and \eqref{NRPdefn}, the map sending the sequence $A = (a_j)$ to the sequence $B = ((-1)^ja_j)$ is an involution between $\RP_{2m}$ and $\NRP_{2m}$ satisfying $F(A) = F(B)$.
It is therefore sufficient to consider the class $\RP_n$.

Let $(a_0,a_1,\dots,a_{n-1}) \in \RP_n$. 
By the definition~\eqref{RPdefn} of $\RP_n$, the $a_j$ satisfy the symmetry condition
\begin{equation}
a_j = a_{n-1-j} \quad \mbox{for $0 \le j <n$}.
\label{symm-condition}
\end{equation}
We regard the sequence entries $a_0, a_1, \dots, a_{\lfloor \frac{n-1}{2} \rfloor}$ as independent random variables that each take the values $1$ and $-1$ with probability $\frac{1}{2}$, and the remaining sequence entries as being determined by~\eqref{symm-condition}. 

Set
\begin{equation}
D_u:= 2\!\!\sum_{0 \le j < \frac{u-1}{2}} a_j a_{j+n-u},
\label{Ddefn}
\end{equation}
and use condition \eqref{symm-condition} to rewrite \eqref{Cdefn} as
\[
C_u = I_u + D_u, 
\]
where the term $I_u$ arises from the product $a_{(u-1)/2}^2$ when $u$ is odd.
Substitute for $C_u$ in \eqref{E} and expand to give
\begin{equation}
E = \floor{\frac{n}{2}} + 2\sum_{u=1}^{n-1} I_u \, \E D_u + \sum_{u=1}^{n-1} \E  D_u^2, 				\label{E-symm} 
\end{equation}
using the summation identity~\eqref{sumIu}. 
Similarly substitute for $C_u$ and $C_v$ in \eqref{V} and expand; using symmetry in $u$ and $v$, and the summation identity \eqref{sumIu}, we obtain
\begin{align}
V &= \floor{\frac{n}{2}}^2
   +4\floor{\frac{n}{2}} \sum_{u=1}^{n-1} I_u \, \E D_u
   +4 \sum_{u,v=1}^{n-1} I_u I_v \, \E\big( D_uD_v \big) 
   +2\floor{\frac{n}{2}} \sum_{u=1}^{n-1} \E D_u^2 		\nonumber\\
  &\phantom{=} +4 \sum_{u,v=1}^{n-1} I_u \, \E\big( D_u D_v^2 \big) 
  +\sum_{u,v=1}^{n-1} \E\big( D_u^2 D_v^2 \big) 		\label{V-symm}.
\end{align}

We shall use \eqref{Ddefn} to express each of 
$\E D_u$,
$\E\big( D_u D_v \big )$,
$\E D_u^2$,
$\E\big( D_u D_v^2 \big )$, and
$\E\big( D_u^2 D_v^2 \big )$
as a sum of expectation terms of the form $\E(a_{j_1} a_{j_2} \dots a_{j_{2r}})$, where $1 \le r \le 4$. In view of the symmetry condition \eqref{symm-condition}, such a term is nonzero exactly when the indices $j_1, j_2, \dots, j_{2r}$ admit a \emph{matching decomposition}, namely a partition into $r$ pairs $\{j, k\}$ such that those pairs which are not \emph{equal} ($j=k$) are \emph{symmetric} ($j+k=n-1$, written as $j \sim k$). We shall identify the index sets admitting a matching decomposition, multiply the resulting expressions by $I_u$ or $I_v$ and sum over $u$ or $v$ in the range $1 \leq u, v \le n-1$ as appropriate, and then substitute into the forms \eqref{E-symm} and \eqref{V-symm} for $E$ and $V$ to calculate $n^2\, \E (\frac{1}{F(A)})$ and $n^4\, \Var (\frac{1}{F(A)})$ from Proposition~\ref{prop:expressions}.

We shall use the observations that, for $1 \le u, v \le n-1$,
\begin{align}
& \mbox{a pair (equal or symmetric) cannot be formed from indices $j, j+n-u$ satisfying $0 \le j < \frac{u-1}{2}$}, \label{one-pair}
\end{align}
\begin{align}
& \mbox{$j \not \sim k$ for indices $j, k$ satisfying $0 \le j < \frac{u-1}{2}$ and $0 \le k < \frac{v-1}{2}$} \label{symm-pair},
\end{align}
and
\begin{align}
& \mbox{the only pairs that can be formed from indices $j, j+n-u, k, k+n-u$ satisfying}		\nonumber \\
& \mbox{$0 \le j < k < \frac{u-1}{2}$ are $k=j+n-u$ (equal) and $j+n-u \sim k+n-u$ (symmetric).} \label{two-pairs}
\end{align}

\begin{description}

\item[The sum $\sum_{u=1}^{n-1}I_u\,\E D_u$.]
From \eqref{Ddefn} and \eqref{one-pair}, this sum is
\begin{equation}
\sum_{u=1}^{n-1}I_u\,\E D_u =
2\sum_{u=1}^{n-1}\,I_u\!\!\sum_{0 \le j < \frac{u-1}{2}} \E(a_j a_{j+n-u}) = 0.
\label{sum1}
\end{equation}

\item[The sum $\sum_{u,v=1}^{n-1}I_uI_v\,\E\big( D_u D_v \big)$.]
From \eqref{Ddefn}, this sum equals
\[
4 \sum_{u,v=1}^{n-1} \,I_uI_v\!\!\sum_{0 \le j < \frac{u-1}{2}} \, \sum_{0 \le k < \frac{v-1}{2}} \E (a_j a_{j+n-u} a_k a_{k+n-v}).
\]
The indices $j, j+n-u, k, k+n-v$ admit a matching decomposition in two ways, represented in Cases 1 and 2 below. All other possible arrangements of these four indices into two pairs are inconsistent with the given ranges for $u,v,j,k$, either because of a single pairing excluded by \eqref{one-pair} or \eqref{symm-pair}, or else by combination of two pairings as summarized in Table~\ref{tab:inconsistent11}.

\begin{description}

\item[Case 1:] $j=k$ and $j+n-u = k+n-v$. This gives $j=k$ and $u=v$, and the contribution to the sum is
\[
4 \sum_{u=1}^{n-1}\,I_u\!\! \sum_{0 \le j < \frac{u-1}{2}} 1
= 4 \sum_{u=1}^{n-1} I_u \left ( \frac{u-1}{2} \right ) 
= 2 \floor{\frac{n}{2}} \floor{\frac{n-2}{2}}
\]
using the summation identity~\eqref{sumIuu-1}.

\item[Case 2:] $j=k$ and $j+n-u \sim k+n-v$. This gives $j=k = \frac{u+v-n-1}{2}$. These values of $j, k$ have already been counted as part of Case 1 when $u=v$, so we impose the constraint $u \ne v$ and calculate the additional contribution to the sum from Case 2 as
\[
4\, I_n \sum_{\stack{u,v=1}{u \ne v}}^{n-1} I_uI_v \,\, I[u+v > n].
\]
Substitute $u = 2U+1$ and $2V+1$ to evaluate this additional contribution as
\begin{align*}
  4\, I_n \sum_{\stack{U,V=0}{U \ne V}}^{\frac{n-3}{2}} I[U+V > \tfrac{n-2}{2}]
& = 4\, I_n \sum_{U,V=0}^{\frac{n-3}{2}} I[U+V > \tfrac{n-2}{2}] - 4\, I_n \sum_{U=0}^{\frac{n-3}{2}} I[U > \tfrac{n-2}{4}] \\
& = 4\, I_n \frac{(n-1)(n-3)}{8} - 4\,I_n \floor{\frac{n-1}{4}}.
\end{align*}

\end{description}

\begin{table}[htb]
\centering
\caption{Inconsistent index pairings for the sum $\sum_{u,v=1}^{n-1}I_uI_v\,\E\big( D_u D_v \big)$}
\label{tab:inconsistent11}
\vspace{1ex}
\begin{tabular}{|ll|l|} 					   \hline
Index pairings 		& 			
	& Inconsistency from   					\\ \hline
$j=k+n-v$, 		& $k=j+n-u$ 		
	& $u+v = 2n$ 		\\[1ex]
$j=k+n-v$, 		& $k \sim j+n-u$ 	
	& $j = \frac{u-1}{2}+\frac{n-v}{2} > \frac{u-1}{2}$ 	\\[1ex]
$j \sim k+n-v$, 	& $k = j+n-u$ 	
	& $k = \frac{v-1}{2}+\frac{n-u}{2} > \frac{v-1}{2}$ 	\\[1ex]
$j \sim k+n-v$, 	& $k \sim j+n-u$ 	
	& $j+k = \frac{u-1}{2} + \frac{v-1}{2}$			\\[1ex] \hline
\end{tabular}
\end{table}

Combine Cases 1 and 2 to give
\begin{equation}
\sum_{u,v=1}^{n-1}I_uI_v\,\E\big( D_u D_v \big)
= 2 \floor{\frac{n}{2}} \floor{\frac{n-2}{2}} +
2\, I_n \left(\frac{n-1}{2}\right)\left(\frac{n-3}{2}\right) - 4\,I_n \floor{\frac{n-1}{4}}.
\label{sum11}
\end{equation}

\item[The sum $\sum_{u=1}^{n-1}\E D_u^2$.]
This sum arises by restricting to $u=v$ in the analysis of the previous sum (with the $I_u I_v$ term absent), and so is
\begin{equation}
\sum_{u=1}^{n-1}\E D_u^2 = 
4 \sum_{u=1}^{n-1} \,\, \sum_{0 \le j < \frac{u-1}{2}} 1 =
4 \sum_{u=1}^{n-1} \floor{\frac{u}{2}} = 
4 \floor{\frac{n}{2}}\floor{\frac{n-1}{2}}
\label{sum2}
\end{equation}
using the summation identity~\eqref{sumflooru}.

\item[The sum $\sum_{u,v=1}^{n-1} I_u \, \E\big( D_u D_v^2 \big)$.]
From \eqref{Ddefn}, this sum equals
\[
 8 \sum_{u,v=1}^{n-1}\, I_u\!\!\sum_{0 \le j < \frac{u-1}{2}} \, \sum_{0 \le k, \ell < \frac{v-1}{2}} \E (a_j a_{j+n-u} a_k a_{k+n-v} a_\ell a_{\ell+n-v}).
\]
The contributions from indices $k = \ell$ involve terms in $\E (a_j a_{j+n-u})$, which by~\eqref{one-pair} is zero. By symmetry in $k$ and $\ell$, it is therefore sufficient to take twice the contributions from indices $k < \ell$, namely
\[
 16 \sum_{u,v=1}^{n-1}\, I_u\!\!\sum_{0 \le j < \frac{u-1}{2}} \, \sum_{0 \le k < \ell < \frac{v-1}{2}} \E (a_j a_{j+n-u} a_k a_{k+n-v} a_\ell a_{\ell+n-v}).
\]
In order for the indices $j, {j+n-u}, k, {k+n-v}, \ell, {\ell+n-v}$ to admit a matching decomposition, at least one equal pair or one symmetric pair must be formed from the four indices $k, k+n-v, \ell, \ell+n-v$. In the given range $k < \ell < \frac{v-1}{2}$, from observation \eqref{two-pairs} either $\ell = k+n-v$ (represented in Cases 2 and 3 below) or $k+n-v \sim \ell+n-v$ (represented in Cases 1 and 4 below). We can now determine all matching decompositions by starting with each of these two pairs in turn, and identifying all possible arrangements of the four remaining unpaired indices into two further pairs consistent with the given ranges for $u,v,j,k,\ell$.
These arrangements are listed in Cases 1 to 4 below; all other arrangements are inconsistent, either because of a single pairing excluded by \eqref{one-pair} or \eqref{symm-pair}, or else by combination of two or more pairings as summarized in Table~\ref{tab:inconsistent12}.

\begin{description}

\item[Case 1:] $k+n-v \sim \ell+n-v$ and $j=k$ and $j+n-u \sim \ell$. This gives $2v-u=n$ and $j = k$ and $\ell = 2v-n-1-j$, and the contribution to the sum is
\[
 16\, I_n \sum_{v=\frac{n+1}{2}}^{n-1} \,\,\sum_{\stack{0 \le j < \frac{2v-n-1}{2}}{j > \frac{3v-2n-1}{2}}} 1.
\]

\item[Case 2:] $\ell=k+n-v$ and $j=k$ and $j+n-u = \ell+n-v$. This gives $2v-u=n$ and $j = k$ and $\ell = j+n-v$, and the contribution to the sum is
\[
 16\, I_n \sum_{v=\frac{n+1}{2}}^{n-1}\,\, \sum_{0 \le j < \frac{3v-2n-1}{2}} 1.
\]

\item[Case 3:] $\ell=k+n-v$ and $j=k$ and $j+n-u \sim \ell+n-v$.
This gives $j = k = \frac{u+2v-2n-1}{2}$ and $\ell=\frac{u-1}{2}$. These values of $j, k, \ell$ have already been counted as part of Case 2 when $2v-u=n$, so we impose the constraint $2v-u \ne n$ and evaluate the additional contribution to the sum from Case 3 as
\[
   16 \!\!\!\! \sum_{\stack{u,v=1}{2v-u \ne n}}^{n-1} \!\!\! I_u\,\, I[u+2v>2n]\,\, I[u<v].
\]

\item[Case 4:] $k+n-v \sim \ell+n-v$ and  $j=k$ and $j+n-u = \ell$. This gives $j = k = \frac{u+2v-2n-1}{2}$ and $\ell=\frac{2v-u-1}{2}$, and the contribution to the sum is 
\[
 16 \sum_{u,v=1}^{n-1}\, I_u\,\, I[u+2v>2n]\,\, I[u>v].
\]

\end{description}

\begin{table}[htb]
\centering
\caption{Inconsistent index pairings for the sum $\sum_{u,v=1}^{n-1} I_u \, \E\big( D_u D_v^2 \big)$}
\label{tab:inconsistent12}
\vspace{1ex}
\begin{tabular}{|lll|l|}						  \hline
Index pairings 		& 			&			
	& Inconsistency from 					       	\\\hline
$\ell=k+n-v$, 		& $j=\ell+n-v$, 	& $k=j+n-u$ 	
	& $u+2v = 3n$ 							\\[1ex]
$\ell=k+n-v$, 		& $j=\ell+n-v$, 	& $k \sim j+n-u$
	& $j = \frac{u-1}{2}+n-v > \frac{u-1}{2}$ 			\\[1ex]
$\ell=k+n-v$, 		& $j \sim \ell+n-v$, 	& $k=j+n-u$ 
	& $\ell = \frac{v-1}{2} + \frac{2n-u-v}{2} > \frac{v-1}{2}$ 	\\[1ex]
$\ell=k+n-v$, 		& $j \sim \ell+n-v$, 	& $k \sim j+n-u$
	& $j+\ell = \frac{u+n-2}{2} > \frac{u-1}{2} + \frac{v-1}{2}$ 	\\[1ex]
$k+n-v \sim \ell+n-v$, 	& $j=\ell$, 		& $k=j+n-u$ 
	& $k = \ell +n-u > \ell$ 					\\[1ex]
$k+n-v \sim \ell+n-v$, 	& $j=\ell$, 		& $k \sim j+n-u$
	& $k+\ell = u-1$ 						\\[1ex]
\hline
\end{tabular}
\end{table}

The combined contribution to the sum from Cases 1 and 2 is 
\[
 16\, I_n \sum_{v=\frac{n+1}{2}}^{n-1} \,\,\sum_{\stack{0 \le j < \frac{2v-n-1}{2}}{j \ne \frac{3v-2n-1}{2}}} 1,
\]
in which the condition $j \ne \frac{3v-2n-1}{2}$ in the inner sum takes effect only when $\frac{3v-2n-1}{2}$ is a non-negative integer. The combined contribution from Cases 1 and 2 therefore equals
\[
 16\, I_n \sum_{v=\frac{n+1}{2}}^{n-1} \frac{2v-n-1}{2} 
-16\, I_n\!\! \sum_{v=\ceiling{\frac{2n+1}{3}}}^{n-1} I_v = 
8\, I_n\left(\frac{n-1}{2}\right)\left(\frac{n-3}{2}\right) - 16\, I_n \floor{\frac{n-1}{6}}
\]
using the summation identity~\eqref{sum2n+13}.

The combined additional contribution to the sum from Cases 3 and 4 is 
\begin{align*}
16\!\! \sum_{\stack{u,v=1}{\stack{u \ne v}{2v-u\ne n}}}^{n-1} 
\!\!\! I_u\,\, I[u+2v>2n]  						
&= 16 \sum_{u,v=1}^{n-1} I_u\,\, I[u+2v>2n] - 
  16 \!\!\!\! \sum_{u=\ceiling{\frac{2n+1}{3}}}^{n-1} \!\!\!\! I_u -
  16\, I_n \!\!\!\! \sum_{v=\ceiling{\frac{3n+1}{4}}}^{n-1} \!\!\! 1 	\\
&= 16\cdot\frac{1}{2}\floor{\frac{n}{2}} \floor{\frac{n-2}{2}} -16\left(\floor{\frac{n}{6}}+I[n \bmod 6 = 4]\right) - 16 I_n \floor{\frac{n-1}{4}},
\end{align*}
where the first summation is evaluated using the identity \eqref{sumIuIu+2v}, the second using the identity \eqref{sum2n+13}, and the third by considering the cases $n \bmod 4 = 1$ and $n \bmod 4 = 3$ separately.

Add the contribution from Cases 1 and 2 to the additional contribution from Cases 3 and 4 to obtain
\begin{align}
&\sum_{u,v=1}^{n-1} I_u \, \E\big( D_u D_v^2 \big) =		
  8\, I_n\left(\frac{n-1}{2}\right)\left(\frac{n-3}{2}\right) 
+ 8\floor{\frac{n}{2}}\floor{\frac{n-2}{2}} \nonumber \\
&\quad\quad\quad - 16 \floor{\frac{n}{6}} 
- 16 I_n \floor{\frac{n-1}{6}} 
- 16I_n\floor{\frac{n-1}{4}}
- 16 \cdot I[n \bmod 6 = 4].
\label{sum12}
\end{align}

\item[The sum $\sum_{u,v=1}^{n-1} \E\big( D_u^2 D_v^2 \big)$.]
From \eqref{Ddefn}, this sum equals
\[
 16 \sum_{u,v=1}^{n-1} \,\, \sum_{0 \le j,k < \frac{u-1}{2}} \, \sum_{0 \le \ell,m < \frac{v-1}{2}} \E (a_j a_{j+n-u} a_k a_{k+n-u} a_\ell a_{\ell+n-v} a_m a_{m+n-v}).
\]

We distinguish five mutually disjoint cases.
\begin{description}

\item[Case 1:] $j=k$ and $\ell=m$.
The contribution to the sum is
\[
 16 \sum_{u,v=1}^{n-1} \,\, \sum_{0 \le j < \frac{u-1}{2}} \, \sum_{0 \le \ell < \frac{v-1}{2}}1 
= 16 \left (\sum_{u=1}^{n-1} \floor{\frac{u}{2}} \right )^2 
= 16 \floor{\frac{n}{2}}^2 \floor{\frac{n-1}{2}}^2
\]
using the summation identity \eqref{sumflooru}.

\item[Case 2:] $j=k$ and $\ell \ne m$.
By symmetry in $\ell$ and $m$, the contribution to the sum is
\[
 2 \cdot 16 \sum_{u,v=1}^{n-1} \,\, \sum_{0 \le j < \frac{u-1}{2}} \, \sum_{0 \le \ell < m < \frac{v-1}{2}}\E (a_\ell a_{\ell+n-v} a_m a_{m+n-v}),
\]
and the inner sum over $\ell < m$ is zero because by \eqref{two-pairs} the index $\ell$ cannot form an equal or symmetric pair with any of the other three indices $\ell+n-v$, $m$, $m+n-v$ in the given range $\ell < m < \frac{v-1}{2}$.

\item[Case 3:] $j \ne k$ and $\ell=m$.
Similarly to Case 2, the contribution to the sum is zero.

\item[Case 4:] $j \ne k$ and $\ell \ne m$ and $u = v$.
By symmetry in $j,k$ and in $\ell,m$, the contribution to the sum is
\[
 4 \cdot 16 \sum_{u=1}^{n-1} \,\, \sum_{0 \le j < k < \frac{u-1}{2}} \, \sum_{0 \le \ell < m < \frac{u-1}{2}} \E (a_j a_{j+n-u} a_k a_{k+n-u} a_\ell a_{\ell+n-u} a_m a_{m+n-u}).
\]
If $j \ne \ell$, then by \eqref{two-pairs} the expectation term is zero because the smaller of $j, \ell$ cannot form a pair with any of the other seven indices. We may therefore take $j = \ell$, and then by similar reasoning take $k = m$, so that the contribution to the sum is
\[
 64 \sum_{u=1}^{n-1} \,\, \sum_{0 \le j < k < \frac{u-1}{2}} \!\!\! 1 =
 32 \sum_{u=1}^{n-1} \floor{\frac{u}{2}} \floor{\frac{u-2}{2}}
 = \frac{64}{3}\floor{\frac{n}{2}} \left(\frac{n-2}{2}\right) \ceiling{\frac{n-4}{2}}
\]
using the summation identity~\eqref{sumflooruu-2}.

\item[Case 5:] $j \ne k$ and $\ell \ne m$ and $u \ne v$.
By symmetry in $u,v$ and in $j,k$, the contribution to the sum is 
\[
 4 \cdot 16 \sum_{\stack{u,v=1}{u < v}}^{n-1} \,\, \sum_{0 \le j < k < \frac{u-1}{2}} \, \sum_{\stack{0 \le \ell,m < \frac{v-1}{2}}{\ell \ne m}} \E (a_j a_{j+n-u} a_k a_{k+n-u} a_\ell a_{\ell+n-v} a_m a_{m+n-v}).
\]
By \eqref{two-pairs}, the index $j$ cannot form an equal or symmetric pair with any of the indices $j+n-u, k, k+n-u$ in the given range $j < k < \frac{u-1}{2}$. Furthermore, $j$ cannot form a symmetric pair with any of the indices $\ell, \ell+n-v, m, m+n-v$ in the given ranges $\ell, m < \frac{v-1}{2}$ and $j < \frac{u-1}{2}$ and $u < v$. A matching decomposition for the eight indices of the expectation term therefore requires that $j$ form an equal pair with one of the four indices $\ell, \ell+n-v, m, m+n-v$. By symmetry in $\ell, m$, we may replace the resulting four contributions to the sum by twice the contribution from $j = \ell$ and twice the contribution from $j = \ell+n-v$. 

We claim that the contribution from $j = \ell+n-v$ is zero. To prove the claim, set $j = \ell+n-v$, so that $\ell$ and $k$ are now constrained via $\ell+n-v<k<\frac{u-1}{2}$ and the remaining six unpaired indices are $\ell+2n-u-v, k, k+n-u, \ell, m, m+n-v$. It is straightforward to check that $\ell$ cannot form an equal or symmetric pair with any of the three indices $\ell+2n-u-v, k, k+n-u$ subject to the given constraint $\ell+n-v<k<\frac{u-1}{2}$, and therefore by \eqref{two-pairs} the only possible pairing involving $\ell$ is $\ell=m+n-v$. We therefore set $\ell = m+n-v$, so that $m$ and $k$ are now constrained via $m+2n-2v < k < \frac{u-1}{2}$ and the remaining four unpaired indices are $m+3n-u-2v, k, k+n-u, m$. By \eqref{one-pair}, the indices $k, k+n-u$ cannot form an equal or symmetric pair and so, for a matching decomposition, $m$ must form an equal or symmetric pair with $k$ or $k+n-u$. This is not possible subject to the given constraint $m+2n-2v < k < \frac{u-1}{2}$, proving the claim.

The contribution to the sum from Case 5 is therefore twice the contribution from $j = \ell$, namely
\[
 128 \sum_{\stack{u,v=1}{u < v}}^{n-1} \,\, \sum_{0 \le j < k < \frac{u-1}{2}} \, \sum_{\stack{0 \le m < \frac{v-1}{2}}{m \ne j}} \E (a_k a_{k+n-u} a_{j+n-u} a_{j+n-v} a_m a_{m+n-v}).
\]
The index $k$ cannot form a symmetric pair with any of the indices $k+n-u,j+n-u,j+n-v,m,m+n-v$ in the given ranges $j < k < \frac{u-1}{2}$ and $m < \frac{v-1}{2}$ and $u<v$. A matching decomposition for the six indices of the expectation term therefore requires that $k$ form an equal pair with one of the four indices $j+n-u,j+n-v,m,m+n-v$. We now determine all matching decompositions by starting with each of these four equal pairs in turn, identifying all possible arrangements of the four remaining unpaired indices into two further pairs consistent with the given ranges for $u,v,j,k,m$. These arrangements are listed in Cases 5a to 5e below. Inconsistent arrangements of indices arising from combinations of two or more pairings are summarized in Table~\ref{tab:inconsistent22}.

\begin{description}
\item[Case 5a:] $k = j+n-u$ and $m = j+n-v$ and $k+n-u \sim m+n-v$.
This gives $j = \frac{2u+2v-3n-1}{2}$ and $k=\frac{2v-n-1}{2}$ and $m=\frac{2u-n-1}{2}$, and the contribution to the sum is
\[
128\, I_n \sum_{\stack{u,v=1}{u < v}}^{n-1} I[2u+2v > 3n]\,\, I[2v-u<n].
\]

\item[Case 5b:] $k = m+n-v$ and $k+n-u \sim j+n-u$ and $j+n-v=m$. 
This gives $j = \frac{2u+2v-3n-1}{2}$ and $k=\frac{2u-2v+n-1}{2}$ and $m=\frac{2u-n-1}{2}$, and the contribution to the sum is
\[
128\, I_n \sum_{\stack{u,v=1}{u<v}}^{n-1} I[2u+2v > 3n]\,\, I[2v-u>n].
\]

\item[Case 5c:] $k = j+n-v$ and $m = j+n-u$ and $k+n-u,m+n-v$ form an equal or symmetric pair.
This gives $k = j+n-v$ and $m=j+n-u$ when the third pair (formed by $k+n-u,m+n-v$) is equal; the special case $j=\frac{2u+2v-3n-1}{2}$, $k=\frac{2u-n-1}{2}$, $m=\frac{2v-n-1}{2}$ is obtained when this pair is symmetric, and so is not counted again. The contribution to the sum is
\[
128 \sum_{\stack{u,v=1}{u < v}}^{n-1} \,\, \sum_{j \ge 0} I[j < \tfrac{2u+v-2n-1}{2}].
\]

\item[Case 5d:] $k = j+n-v$ and $m \sim k+n-u$ and $j+n-u, m+n-v$ form an equal or symmetric pair.
This gives $k=j+n-v$ and $m=u+v-n-1-j$ when the third pair is symmetric; the special case $j=\frac{2u-n-1}{2}$, $k=\frac{2u-2v+n-1}{2}$, $m=\frac{2v-n-1}{2}$ is obtained when this pair is equal, and is not counted again. The contribution to the sum is
\[
128 \sum_{\stack{u,v=1}{u < v}}^{n-1} \,\, \sum_{j \ge 0} I[\tfrac{2u+v-2n-1}{2} < j < \tfrac{u+2v-2n-1}{2}].
\]

\item[Case 5e:] $k = m$ and $k+n-u \sim j+n-v$ and $j+n-u, m+n-v$ form an equal or symmetric pair.
This gives $k = m = u+v-n-1-j$ when the third pair is symmetric; the special case $j=\frac{2u-n-1}{2}$, $k=m=\frac{2v-n-1}{2}$ is obtained when this pair is equal, and is not counted again. The contribution to the sum is
\[
128 \sum_{\stack{u,v=1}{u < v}}^{n-1} \,\, \sum_{j \ge 0} I[\tfrac{u+2v-2n-1}{2} < j < \tfrac{u+v-n-1}{2}].
\]
\end{description}

\begin{table}[htb]
\centering
\caption{Inconsistent index pairings for Case 5 of the sum $\sum_{u,v=1}^{n-1} \E\big( D_u^2 D_v^2 \big)$}
\label{tab:inconsistent22}
\vspace{1ex}
\begin{tabular}{|lll|l|}						  \hline
Index pairings 		& 			&			
	& Inconsistency from 					       	\\\hline
			&			&		&	\\[-2ex]
$k=j+n-u$, 		& $m=k+n-u$, 		& $j+n-v \sim m+n-v$ 	
	& $m=\frac{v-1}{2}+\frac{v-u}{2}+\frac{n-u}{2} > \frac{v-1}{2}$	\\[1ex]
$k=j+n-u$, 		& $m \sim k+n-u$,	& $j+n-v \sim m+n-v$ 	
	& $u=v$ 							\\[1ex]
$k=j+n-u$, 		& $m = j+n-v$,		& $k+n-u = m+n-v$ 	
	& $u=v$ 							\\[1ex]
$k=j+n-u$, 		& $m \sim j+n-v$,	& $k+n-u = m+n-v$ 	
	& $k=\frac{n-1}{2}$ 						\\[1ex]
$k=j+n-u$, 		& $m \sim j+n-v$,	& $k+n-u \sim m+n-v$ 	
	& $u=n$ 							\\[1ex] 
\hline 			&			&		&	\\[-2ex]
$k=m+n-v$, 		& $k+n-u \sim j+n-u$, 	& $j+n-v \sim m$ 	
	& $u=n$								\\[1ex]
$k=m+n-v$, 		& $k+n-u \sim j+n-v$, 	& $j+n-u = m$ 	
	& $k = \frac{n-1}{2}$						\\[1ex]
$k=m+n-v$, 		& $k+n-u \sim j+n-v$, 	& $j+n-u \sim m$ 	
	& $v=n$								\\[1ex]
$k=m+n-v$, 		& $k+n-u = m$ 		& 
	& $u+v=2n$							\\[1ex]
$k=m+n-v$, 		& $k+n-u \sim m$ 	&
	& $k = \frac{u-1}{2}+\frac{n-v}{2} > \frac{u-1}{2}$		\\[1ex]
\hline 			&			&		&	\\[-2ex]
$k=j+n-v$, 		& $m \sim j+n-u$, 	& $k+n-u = m+n-v$ 	
	& $m = \frac{n-1}{2}$						\\[1ex]
$k=j+n-v$, 		& $m \sim j+n-u$,	& $k+n-u \sim m+n-v$ 	
	& $v=n$ 							\\[1ex]
$k=j+n-v$, 		& $m = k+n-u$,		& $j+n-u = m+n-v$ 	
	& $v=n$ 							\\[1ex]
$k=j+n-v$, 		& $m = k+n-u$,		& $j+n-u \sim m+n-v$ 	
	& $m=\frac{n-1}{2}$ 						\\[1ex]
\hline 			&			&		&	\\[-2ex]
$k=m$,	 		& $k+n-u \sim j+n-u$,	& $j+n-v \sim m+n-v$ 	
	& $u=v$						 		\\[1ex]
$k=m$,	 		& $k+n-u = m+n-v$ 	& 
	& $u=v$					 			\\[1ex]
$k=m$,	 		& $k+n-u \sim m+n-v$, 	& $j+n-u \sim j+n-v$
	& $j=k$						 		\\[1ex]
\hline
\end{tabular}
\end{table}
\end{description}

We now calculate the total contribution to the sum from Cases 1 to 5.
The combined contribution from Cases 5a and 5b is 
\[
 128\, I_n \sum_{\stack{u,v=1}{u<v}}^{n-1} I[2u+2v > 3n]\,\, I[2v-u \ne n] 
 = 128\, I_n \sum_{\stack{u,v=1}{u<v}}^{n-1} I[2u+2v > 3n]
 - 128\, I_n \!\! \sum_{v=\ceiling{\frac{5n+1}{6}}}^{n-1} 1.
\]
Since each summand of the sum over $u,v$ is symmetric in $u$ and $v$, this combined contribution is
\begin{align*}
 & 128 \cdot \frac{1}{2} I_n \sum_{u,v=1}^{n-1} I[2u+2v > 3n] - 64\, I_n \!\! \sum_{u=\ceiling{\frac{3n+1}{4}}}^{n-1} \!\! 1 - 128\, I_n \!\!\sum_{v=\ceiling{\frac{5n+1}{6}}}^{n-1} \!\!1 \\
 &\quad\quad= 32\, I_n\left(\frac{n-1}{2}\right)\left(\frac{n-3}{2}\right) -64\, I_n \floor{\frac{n-1}{4}} -128\, I_n \floor{\frac{n-1}{6}}.
\end{align*}

The combined contribution to the sum from Cases 5c, 5d, and 5e is 
\begin{align*}
& 128 \sum_{\stack{u,v=1}{u<v}}^{n-1} \sum_{j \ge 0}
I[j < \tfrac{u+v-n-1}{2},\, j \ne \tfrac{2u+v-2n-1}{2},\, j \ne \tfrac{u+2v-2n-1}{2}] \\
&\quad\quad = 128 \sum_{\stack{u,v=1}{u<v}}^{n-1} \left(\floor{\frac{u+v-n}{2}}I[u+v>n] -I_v\, I[2u+v > 2n] - I_u\, I[u+2v > 2n] \right),
\end{align*}
which is of the form $\displaystyle{128 \sum_{\stack{u,v=1}{u<v}}^{n-1} s_{u,v}}$ where $s_{u,v}$ is symmetric in $u$ and $v$. We calculate this combined contribution as $64\sum_{u,v=1}^{n-1} s_{u,v} - 64\sum_{u=1}^{n-1} s_{u,u}$. We have
\begin{align*}
64 \sum_{u,v=1}^{n-1}s_{u,v} 
  &= 64 \sum_{u,v=1}^{n-1} \floor{\frac{u+v-n}{2}}I[u+v>n] -128 \sum_{u,v=1}^{n-1} I_u\, I[u+2v > 2n] \\
  &= 64\left( \frac{2}{3}\floor{\frac{n}{2}}\left(\frac{n-2}{2}\right)\ceiling{\frac{n-4}{2}} + \frac{1}{2}\floor{\frac{n}{2}}\floor{\frac{n-2}{2}} \right) - 128 \cdot \frac{1}{2}\floor{\frac{n}{2}}\floor{\frac{n-2}{2}},
\end{align*}
where the second summation is evaluated using the identity \eqref{sumIuIu+2v}, and the first by substituting $w=u+v-n$ to obtain
$64\sum_{u=1}^{n-1}\sum_{w=1}^{u-1}\lfloor\frac{w}{2}\rfloor = 
 64\sum_{u=1}^{n-1}\lfloor\frac{u}{2}\rfloor \lfloor\frac{u-1}{2}\rfloor$ from the identity~\eqref{sumflooru}, then applying the identity
\[
 \floor{\frac{u}{2}}\floor{\frac{u-1}{2}} = \floor{\frac{u}{2}}\floor{\frac{u-2}{2}} + I_u \left(\frac{u-1}{2}\right),
\]
and finally using the identities \eqref{sumflooruu-2} and~\eqref{sumIuu-1}. We also have
\begin{align*}
64\sum_{u=1}^{n-1} s_{u,u} 
  &=64 \!\! \sum_{u=\ceiling{\frac{n+1}{2}}}^{n-1} \floor{\frac{2u-n}{2}} - 128 \!\! \sum_{u=\ceiling{\frac{2n+1}{3}}}^{n-1} I_u \\
  &= 32 \floor{\frac{n}{2}}\floor{\frac{n-2}{2}} -128 \left ( \floor{\frac{n}{6}} + I[n \bmod 6 = 4] \right ),
\end{align*}
where the first summation is evaluated by considering the cases $n$ even and $n$ odd separately, and the second using the identity \eqref{sum2n+13}.

The combined contribution to the sum from Cases 5c, 5d, and 5e is therefore
\begin{align*}
& 64\sum_{u,v=1}^{n-1}s_{u,v} - 64\sum_{u=1}^{n-1}s_{u,u} \\
& \quad\quad = \frac{128}{3}\floor{\frac{n}{2}}\left(\frac{n-2}{2}\right)\ceiling{\frac{n-4}{2}} - 64\floor{\frac{n}{2}}\floor{\frac{n-2}{2}}
+128 \floor{\frac{n}{6}} + 128 \cdot I[n \bmod 6 = 4].
\end{align*}

Add the contributions from Case 1, Case 4, Cases 5a/5b, and Cases 5c/5d/5e to obtain
\begin{align}
&\sum_{u,v=1}^{n-1} \E\big( D_u^2 D_v^2 \big) =
  16\floor{\frac{n}{2}}^2\floor{\frac{n-1}{2}}^2
 +64\floor{\frac{n}{2}}\left(\frac{n-2}{2}\right)\ceiling{\frac{n-4}{2}}
 -64\floor{\frac{n}{2}}\floor{\frac{n-2}{2}} 			
\nonumber \\
& +32\,I_n\left(\frac{n-1}{2}\right)\left(\frac{n-3}{2}\right)
 -64\,I_n\floor{\frac{n-1}{4}}
-128\,I_n\floor{\frac{n-1}{6}}
+128\floor{\frac{n}{6}} +128 \cdot I[n \bmod 6 = 4].
\label{sum22}
\end{align}

\end{description}

We are now ready to determine $n^2\, \E (\frac{1}{F(A)})$ and $n^4\, \Var (\frac{1}{F(A)})$ using Proposition~\ref{prop:expressions}, separating the calculation according to whether $n$ is even or odd. Substitution of \eqref{sum1} and \eqref{sum2} into \eqref{E-symm} gives 
\begin{equation}
E = \frac{1}{2}(2n^2-3n+I_n),
\label{E-symm-res}
\end{equation}
and then from \eqref{Ef44} we obtain 
\[
n^2\, \E \Big(\frac{1}{F(A)}\Big) = 2n^2-3n+\frac{1-(-1)^n}{2},
\] 
as required.
Substitution of \eqref{sum1}, \eqref{sum11}, \eqref{sum2}, \eqref{sum12}, and \eqref{sum22} into \eqref{V-symm} gives, after simplification,
\[
V = \begin{cases}
\displaystyle{n^4+5n^3-\frac{207}{4}n^2+76n+64\floor{\frac{n}{6}}+64\cdot I[n \bmod 6 =4]}	& \mbox{for $n$ even,} \\[2ex]
\displaystyle{n^4+5n^3-\frac{131}{4}n^2+\frac{77}{2}n-128\floor{\frac{n-1}{6}}-144\floor{\frac{n-1}{4}}-\frac{47}{4}}
	& \mbox{for $n$ odd},
\end{cases}
\]
and then from \eqref{Vf44} and \eqref{E-symm-res} we find that
\[
n^4\, \Var \Big(\frac{1}{F(A)}\Big) = 
\begin{cases}
   \displaystyle{32n^3 - 216n^2 + 304n 
   +256\floor{\frac{n}{6}}
   +256 \cdot I[n \bmod 6 = 4]} 	& \mbox{for $n$ even,} \\[2ex]
   \displaystyle{32n^3 - 144n^2 + 160n 
   -576\floor{\frac{n-1}{4}}
   -512\floor{\frac{n-1}{6}}}
   -48					& \mbox{for $n$ odd},
\end{cases}
\]
as required.
\end{proof}

\section{The class $\SSP_{n}$}
\label{sec:skew-symmetric}
In this section, we use Proposition~\ref{prop:expressions} to prove Theorem~\ref{thm:skew-symm-mean-var} for the class $\SSP_n$ of skew-symmetric binary sequences.

\begin{proof}[Proof of Theorem~$\ref{thm:skew-symm-mean-var}$]
We modify the proof of Theorem~\ref{thm:symm-mean-var} to obtain the result.
Let $(a_0,a_1,\dots,a_{n-1}) \in \SSP_n$. By the definition~\eqref{SSPdefn} of $\SSP_n$, the $a_j$ satisfy the skew-symmetry condition
\begin{equation}
a_j = (-1)^{j+\frac{n-1}{2}} a_{n-1-j} \quad \mbox{for $0 \le j <n$}.
\label{skew-symm-condition}
\end{equation}
We regard the sequence entries $a_0, a_1, \dots, a_{\frac{n-1}{2}}$ as independent random variables that each take the values $1$ and $-1$ with probability $\frac{1}{2}$, and the remaining sequence entries as being determined by~\eqref{skew-symm-condition}. 
Use condition \eqref{skew-symm-condition} to rewrite \eqref{Cdefn} as
\[
C_u = I_u \big( (-1)^\frac{n-u}{2} + D_u \big ).
\]
Substitute for $C_u$ in \eqref{E}, and for $C_u$ and $C_v$ in \eqref{V}, to obtain the expressions
\begin{align}
E &= \frac{n-1}{2} + 2\sum_{u=1}^{n-1} (-1)^\frac{n-u}{2} I_u\, \E D_u + \sum_{u=1}^{n-1} I_u \, \E  D_u^2, 						\label{E-skew-symm} \\
V &= \left(\frac{n-1}{2}\right)^2
   +4\left(\frac{n-1}{2}\right) \sum_{u=1}^{n-1} (-1)^\frac{n-u}{2} I_u \, \E D_u
   +4 \sum_{u,v=1}^{n-1} (-1)^\frac{2n-u-v}{2} I_u I_v \, \E\big( D_uD_v \big) 									\nonumber \\
  &\phantom{=} +2\left(\frac{n-1}{2}\right) \sum_{u=1}^{n-1} I_u \, \E D_u^2 	
  +4 \sum_{u,v=1}^{n-1} (-1)^\frac{n-u}{2} I_u I_v\, \E\big( D_u D_v^2 \big) 
  +\sum_{u,v=1}^{n-1} I_u I_v \, \E\big( D_u^2 D_v^2 \big), \label{V-skew-symm}
\end{align}
noting that $(-1)^{n-u} I_u = I_u$ because $n$ is odd here.

We express each of 
$\E D_u$,
$\E\big( D_u D_v \big )$,
$\E D_u^2$,
$\E\big( D_u D_v^2 \big )$, and
$\E\big( D_u^2 D_v^2 \big )$
as a sum of expectation terms of the form $\E(a_{j_1} a_{j_2} \dots a_{j_{2r}})$, where $1 \le r \le 4$, and then calculate $n^2\, \E (\frac{1}{F(A)})$ and $n^4\, \Var (\frac{1}{F(A)})$ from Proposition~\ref{prop:expressions} by substitution into the forms \eqref{E-skew-symm} and~\eqref{V-skew-symm}.
In view of the skew-symmetry condition \eqref{skew-symm-condition}, the expectation term $\E(a_{j_1} a_{j_2} \dots a_{j_{2r}})$ is nonzero exactly when the indices $j_1, j_2, \dots, j_{2r}$ admit a matching decomposition. The index sets admitting a matching decomposition are identical to those in the proof of Theorem~\ref{thm:symm-mean-var}; each symmetric index pair $\{j, k\}$ in the resulting expectation term introduces an additional multiplicative factor $(-1)^{j+\frac{n-1}{2}}$, by~\eqref{skew-symm-condition}.

We use the same case analyses in the following calculations as in the proof of Theorem~\ref{thm:symm-mean-var}, inserting additional factors $I_u$, $I_v$, $(-1)^\frac{n-u}{2}$, and $(-1)^\frac{2n-u-v}{2}$ as appropriate. 

\begin{description}

\item[The sum $\sum_{u=1}^{n-1} (-1)^\frac{n-u}{2} I_u \, \E D_u$.]
Similarly to~\eqref{sum1}, this sum is zero.

\item[The sum $\sum_{u,v=1}^{n-1} (-1)^\frac{2n-u-v}{2} I_u I_v \, \E\big( D_uD_v \big)$.]
We modify the calculation of $\sum_{u,v=1}^{n-1} I_u I_v \, \E\big( D_uD_v \big)$ in the proof of Theorem~\ref{thm:symm-mean-var}. Cases 1 and 2 both receive a multiplicative factor $(-1)^\frac{2n-u-v}{2}I_uI_v$ in place of~$I_uI_v$. Case 2 receives a further multiplicative factor $(-1)^{j+n-u+\frac{n-1}{2}} = (-1)^\frac{v-u}{2}$ because of the symmetric index pair $\{j+n-u,\,k+n-v\}$ with $j=\frac{u+v-n-1}{2}$.
The resulting multiplicative factor in both cases is $I_uI_v$ (using the relation $u=v$ for Case~1), and the sum is therefore unchanged from~\eqref{sum11}. Since $n$ is odd, expression \eqref{sum11} simplifies to give
\[
\sum_{u,v=1}^{n-1} (-1)^\frac{2n-u-v}{2} I_u I_v \, \E\big( D_uD_v \big)
= 4 \left( \frac{n-1}{2} \right) \left( \frac{n-3}{2} \right) - 4 \floor{\frac{n-1}{4}}.
\]

\item[The sum $\sum_{u=1}^{n-1} I_u \, \E D_u^2$.]
We modify the calculation of $\sum_{u=1}^{n-1} \E D_u^2$ in the proof of Theorem~\ref{thm:symm-mean-var} by introducing an additional factor $I_u$, giving
\[
\sum_{u=1}^{n-1} I_u \, \E D_u^2 
 = 4 \sum_{u=1}^{n-1} I_u\, \left ( \frac{u-1}{2} \right )
 = 2 \left(\frac{n-1}{2}\right) \left(\frac{n-3}{2}\right)
\]
using the summation identity~\eqref{sumIuu-1}.

\item[The sum $\sum_{u,v=1}^{n-1} (-1)^\frac{n-u}{2} I_u I_v \, \E\big( D_u D_v^2 \big)$.] 
We modify the calculation of $\sum_{u,v=1}^{n-1} I_u \, \E\big( D_uD_v^2 \big)$ in the proof of Theorem~\ref{thm:symm-mean-var}. Cases 1 to 4 all receive a multiplicative factor $(-1)^\frac{n-u}{2} I_u I_v$ in place of~$I_u$. The presence of symmetric index pairs introduces further multiplicative factors:
in Case 1, a factor of $(-1)^{(k+n-v)+\frac{n-1}{2}}(-1)^{(j+n-u)+\frac{n-1}{2}} = (-1)^\frac{n-u}{2}$ because of index pairs $\{k+n-v,\,\ell+n-v\}$ and $\{j+n-u,\,\ell\}$ with $j=k$ and $2v-u=n$;
in Case 3, a factor of $(-1)^{(j+n-u)+\frac{n-1}{2}} = (-1)^\frac{2v-u+n-2}{2}$ because of index pair $\{j+n-u,\,\ell+n-v\}$ with $j=\frac{u+2v-2n-1}{2}$;
and in Case 4, a factor of $(-1)^{(k+n-v)+\frac{n-1}{2}} = (-1)^\frac{u+n-2}{2}$ because of index pair $\{k+n-v,\,\ell+n-v\}$ with $k=\frac{u+2v-2n-1}{2}$.

The resulting multiplicative factors are given in Table~\ref{tab:factors12} (using the relation $2v-u=n$ to evaluate Case~2). 
We see that the calculation differs from that of $\sum_{u,v=1}^{n-1} I_u \, \E\big( D_uD_v^2 \big)$ in the proof of Theorem~\ref{thm:symm-mean-var} only via the introduction of a factor $I_v$ in all four cases.

\begin{table}[htb]
\centering
\caption{Multiplicative factors in calculation of
$\sum_{u,v=1}^{n-1} (-1)^\frac{n-u}{2} I_u I_v \, \E\big( D_uD_v^2 \big)$}
\label{tab:factors12}
\vspace{1ex}
\begin{tabular}{|l|l|l|}					          \hline
Case	& Multiplicative factor		& Evaluates to 			\\\hline
	&	 			&				\\[-2ex]
1	& $(-1)^\frac{n-u}{2}I_uI_v \cdot (-1)^\frac{n-u}{2}$		
					& $I_uI_v$			\\[1ex]
2	& $(-1)^\frac{n-u}{2}I_uI_v$	& $I_uI_v$			\\[1ex] 
3	& $(-1)^\frac{n-u}{2}I_uI_v \cdot (-1)^\frac{2v-u+n-2}{2}$		
		 			& $I_uI_v$			\\[1ex]
4	& $(-1)^\frac{n-u}{2}I_uI_v \cdot (-1)^\frac{u+n-2}{2}$		
					& $I_uI_v$			\\[1ex]
\hline
\end{tabular}
\end{table}

The combined contribution from Cases 1 and 2 is therefore
\[
 16 \sum_{v=\frac{n+1}{2}}^{n-1} I_v \left(\frac{2v-n-1}{2} \right)
-16 \!\! \sum_{v=\ceiling{\frac{2n+1}{3}}}^{n-1} I_v = 
16\floor{\frac{n-1}{4}}\floor{\frac{n-3}{4}} - 16 \floor{\frac{n-1}{6}}
\]
using the summation identities \eqref{sumIu2u-n-1} and~\eqref{sum2n+13}.
The combined additional contribution from Cases 3 and 4 is
\begin{align*}
16 \sum_{\stack{u,v=1}{\stack{u \ne v}{2v-u\ne n}}}^{n-1} 
\!\!\! I_uI_v\,\, I[u+2v>2n]
&= 16 \sum_{u,v=1}^{n-1} I_uI_v\,\, I[u+2v>2n] - 
  16 \!\!\! \sum_{u=\ceiling{\frac{2n+1}{3}}}^{n-1} \!\!\!\! I_u -
  16 \!\!\!\! \sum_{v=\ceiling{\frac{3n+1}{4}}}^{n-1} \!\!\! I_v 	\\
&= 16\floor{\frac{n-1}{4}}\floor{\frac{n-3}{4}} -16\floor{\frac{n-1}{6}} - 16 \floor{\frac{n-1}{8}}
\end{align*}
using the summation identities \eqref{sumIuIvI2u+v}, \eqref{sum2n+13}, and~\eqref{sum3n+14}.

Add the contributions from Cases 1 and 2 to the additional contribution from Cases 3 and 4 to obtain
\[
\sum_{u,v=1}^{n-1} (-1)^\frac{n-u}{2} I_uI_v \, \E\big( D_u D_v^2 \big) =
32\floor{\frac{n-1}{4}}\floor{\frac{n-3}{4}} - 32 \floor{\frac{n-1}{6}} - 16 \floor{\frac{n-1}{8}}.
\]

\item[The sum $\sum_{u,v=1}^{n-1} I_u I_v \, \E\big( D_u^2 D_v^2 \big)$.]
We modify the calculation of $\sum_{u,v=1}^{n-1} \E\big( D_u^2D_v^2 \big)$ in the proof of Theorem~\ref{thm:symm-mean-var}. All cases receive a multiplicative factor $I_u I_v$ in place of~$1$. The presence of symmetric index pairs introduces further multiplicative factors:
in Case 5a, a factor of $(-1)^{(k+n-u)+\frac{n-1}{2}} = 1$ because of index pair $\{k+n-u,\,m+n-v\}$ with $k=\frac{2v-n-1}{2}$;
in Case 5b, a factor of $(-1)^{(k+n-u)+\frac{n-1}{2}} = 1$ because of index pair $\{k+n-u,\,j+n-u\}$ with $k=\frac{2u-2v+n-1}{2}$;
in Case 5d, a factor of $(-1)^m (-1)^{j+n-u} = 1$ because of index pairs $\{m,\,k+n-u\}$ and $\{j+n-u,\,m+n-v\}$ with $m=u+v-n-1-j$;
and in Case 5e, a factor of $(-1)^{k+n-u} (-1)^{j+n-u} = 1$ because of index pairs $\{k+n-u,\,j+n-v\}$ and $\{j+n-u,\,m+n-v\}$ with $k=u+v-n-1-j$.
Since these further factors all equal~1, we see that the calculation differs from that of $\sum_{u,v=1}^{n-1} \E\big( D_uD_v^2 \big)$ in the proof of Theorem~\ref{thm:symm-mean-var} only via the introduction of a factor $I_uI_v$ in all cases.

The contribution from Case 1 is therefore
\[
16 \left (\sum_{u=1}^{n-1} I_u \left(\frac{u-1}{2}\right) \right )^2 = 
4\left(\frac{n-1}{2}\right)^2\left(\frac{n-3}{2}\right)^2
\]
using the summation identity~\eqref{sumIuu-1}.
The contribution from Cases 2 and~3 is zero.
The contribution from Case 4 is
\[
 32 \sum_{u=1}^{n-1} I_u \left(\frac{u-1}{2}\right)\left(\frac{u-3}{2}\right)
 = \frac{32}{3}\left(\frac{n-1}{2}\right)\left(\frac{n-3}{2}\right)\left(\frac{n-5}{2}\right)
\]
using the summation identity~\eqref{sumIuu-1u-3}.
The contribution from Cases 5a and 5b is
\begin{align*}
& 128 \sum_{\stack{u,v=1}{u<v}}^{n-1} I_uI_v \, I[2u+2v > 3n]\,\, I[2v-u \ne n] \\
  & \quad\quad= 128\cdot \frac{1}{2} \sum_{u,v=1}^{n-1} I_uI_v \, I[2u+2v > 3n] - 64 \!\!\!\! \sum_{u=\ceiling{\frac{3n+1}{4}}}^{n-1} \!\! I_u - 128 \!\!\!\!\sum_{v=\ceiling{\frac{5n+1}{6}}}^{n-1} \!\! I_v \\
  & \quad\quad= 32\floor{\frac{n-1}{4}}\floor{\frac{n-5}{4}} - 64 \floor{\frac{n-1}{8}} -128 \floor{\frac{n-1}{12}},
\end{align*}
where the first summation is evaluated by substituting $u=2U+1$ and $v=2V+1$ and considering the cases $n \bmod 4 = 1$ and $n \bmod 4 = 3$ separately, the second using identity~\eqref{sum3n+14}, and the third using an identity analogous to~\eqref{sum3n+14}.
The combined contribution to the sum from Cases 5c, 5d, and 5e is 
\[
128 \sum_{\stack{u,v=1}{u<v}}^{n-1} \left(I_uI_v\left(\frac{u+v-n-1}{2}\right)I[u+v>n] -I_uI_v\, I[2u+v > 2n] - I_uI_v\, I[u+2v > 2n] \right),
\]
which is of the form $\displaystyle{128 \sum_{\stack{u,v=1}{u<v}}^{n-1} s'_{u,v}}$ where $s'_{u,v}$ is symmetric in $u$ and $v$. We calculate 
\begin{align*}
64 \sum_{u,v=1}^{n-1}s'_{u,v} 
  &= 64 \sum_{u,v=1}^{n-1} I_uI_v\left(\frac{u+v-n-1}{2}\right)I[u+v>n] -128 \sum_{u,v=1}^{n-1} I_uI_v\, I[u+2v > 2n] \\
  &= 64 \cdot \frac{1}{6}\left(\frac{n-1}{2}\right)\left(\frac{n-3}{2}\right)\left(\frac{n-5}{2}\right) - 128 \floor{\frac{n-1}{4}}\floor{\frac{n-3}{4}},
\end{align*}
where the first summation is evaluated by substituting $w=u+v-n$ to obtain the expression $64\sum_{u=1}^{n-1}I_u\sum_{w=1}^{u-1}I_w \left(\frac{w-1}{2}\right)$ and then using the identities~\eqref{sumIuu-1} and~\eqref{sumIuu-1u-3}, and the second using the identity~\eqref{sumIuIvI2u+v}; and
\begin{align*}
64\sum_{u=1}^{n-1} s'_{u,u} 
  &=64 \!\! \sum_{u=\frac{n+1}{2}}^{n-1} I_u\left(\frac{2u-n-1}{2}\right) - 128 \!\! \sum_{u=\ceiling{\frac{2n+1}{3}}}^{n-1} I_u \\
  &= 64 \floor{\frac{n-1}{4}}\floor{\frac{n-3}{4}} -128 \floor{\frac{n-1}{6}},
\end{align*}
using the summation identities~\eqref{sumIu2u-n-1} and~\eqref{sum2n+13}.
The combined contribution to the sum from Cases 5c, 5d, and 5e is 
\begin{align*}
& 64\sum_{u,v=1}^{n-1}s'_{u,v} - 64\sum_{u=1}^{n-1}s'_{u,u} \\
& \quad\quad =
  \frac{32}{3}\left(\frac{n-1}{2}\right)\left(\frac{n-3}{2}\right)\left(\frac{n-5}{2}\right) - 192 \floor{\frac{n-1}{4}}\floor{\frac{n-3}{4}} +128 \floor{\frac{n-1}{6}}.
\end{align*}
Add the contributions from Case 1, Case 4, Cases 5a/5b, and Cases 5c/5d/5e to obtain
\begin{align*}
& \sum_{u,v=1}^{n-1} I_u I_v \, \E\big( D_u^2 D_v^2 \big) \\
&\quad\quad = 4\left(\frac{n-1}{2}\right)^2\left(\frac{n-3}{2}\right)^2
  + \frac{64}{3}\left(\frac{n-1}{2}\right)\left(\frac{n-3}{2}\right)\left(\frac{n-5}{2}\right) 
 - 192 \floor{\frac{n-1}{4}}\floor{\frac{n-3}{4}} \\
&\quad\quad \phantom{=}+ 32\floor{\frac{n-1}{4}}\floor{\frac{n-5}{4}} 
 +128 \floor{\frac{n-1}{6}} - 64 \floor{\frac{n-1}{8}} -128 \floor{\frac{n-1}{12}}.
\end{align*}

\end{description}

We now use the calculated expectation expressions to determine $n^2\, \E (\frac{1}{F(A)})$ and $n^4\, \Var (\frac{1}{F(A)})$ using Proposition~\ref{prop:expressions}. Substitution into \eqref{E-skew-symm} gives 
\begin{equation}
E = \frac{1}{2}(n^2-3n+2),
\label{E-skew-symm-res}
\end{equation}
and then from \eqref{Ef44} we obtain 
\[
n^2\, \E \Big( \frac{1}{F(A)} \Big) = n^2-3n+2,
\] 
as required. Substitution into \eqref{V-skew-symm}, after separation according to whether $n \bmod 4 = 1$ or $n \bmod 4 = 3$, gives
\[
V = \begin{cases}
\displaystyle{\frac{1}{4}n^4+\frac{7}{6}n^3-\frac{75}{4}n^2+\frac{151}{3}n-128\floor{\frac{n-1}{8}}-128\floor{\frac{n-1}{12}}}-33 	& \mbox{for $n \bmod 4 = 1$,} \\[2ex]
\displaystyle{\frac{1}{4}n^4+\frac{7}{6}n^3-\frac{75}{4}n^2+\frac{127}{3}n-128\floor{\frac{n-1}{8}}-128\floor{\frac{n-1}{12}}}-9 	& \mbox{for $n \bmod 4 = 3$,} 
\end{cases}
\]
and then from \eqref{Vf44} and \eqref{E-skew-symm-res} we find that
\[
n^4\, \Var \Big( \frac{1}{F(A)} \Big) = 
\frac{32}{3}n^3 - 88n^2 + \frac{592}{3}n 
-512\floor{\frac{n-1}{8}}
-512\floor{\frac{n-1}{12}}
-88+16(-1)^\frac{n-1}{2}(n-3),
\]
as required.

\end{proof}

\section*{Acknowledgements}
I am most grateful to Kai-Uwe Schmidt for many helpful discussions, which significantly improved this paper. I was deeply saddened both personally and professionally by his untimely passing in August 2023 and I respectfully dedicate this paper to his memory.


\begin{thebibliography}{10}

\bibitem{aupetit-liardet-slimane}
S.\ Aupetit, P.\ Liardet, and M.\ Slimane.
\newblock Evolutionary search for binary strings with low aperiodic
  auto-correlations.
\newblock In P.~Liardet et~al., editors, {\em Artificial Evolution}, volume
  2936 of {\em Lecture Notes in Computer Science}, pages 39--50.
  Springer-Verlag, Berlin Heidelberg, 2004.

\bibitem{borwein-book}
P.\ Borwein.
\newblock {\em Computational Excursions in Analysis and Number Theory}.
\newblock CMS Books in Mathematics. Springer-Verlag, New York, NY, 2002.

\bibitem{borwein-choi-average-norm}
P.\ Borwein and K.-K.S.\ Choi.
\newblock The average norm of polynomials of fixed height.
\newblock {\em Trans.\ Amer.\ Math.\ Soc.}, {\bf 359}:923--936, 2007.

\bibitem{borwein-lockhart-expected-norm}
P.\ Borwein and R.\ Lockhart.
\newblock The expected {$L_p$} norm of random polynomials.
\newblock {\em Proc.\ Amer.\ Math. Soc.}, {\bf 129}:1463--1472, 2001.

\bibitem{fredman-saffari-smith}
M.L.\ Fredman, B.\ Saffari, and B.\ Smith.
\newblock Polyn\^{o}mes r\'eciproques: conjecture d'{Erd\H{o}s} en norme
  ${L^4}$, taille des autocorr\'elations et inexistence des codes de {B}arker.
\newblock {\em C.R.\ Acad.\ Sci.\ Paris}, {\bf 308}:461--464, 1989.

\bibitem{golay-defn}
M.J.E.\ Golay.
\newblock A class of finite binary sequences with alternate autocorrelation
  values equal to zero.
\newblock {\em IEEE Trans.\ Inf.\ Theory}, {\bf IT-18}:449--450, 1972.

\bibitem{golay-long}
M.J.E.\ Golay.
\newblock The merit factor of long low autocorrelation binary sequences.
\newblock {\em IEEE Trans.\ Inf.\ Theory}, {\bf IT-28}:543--549, 1982.

\bibitem{golay-harris}
M.J.E.\ Golay and D.B.\ Harris.
\newblock A new search for skewsymmetric binary sequences with optimal merit
  factors.
\newblock {\em IEEE Trans.\ Inf.\ Theory}, {\bf 36}:1163--1166, 1990.

\bibitem{hoholdt-jensen}
T.\ H{\o}holdt and H.E.\ Jensen.
\newblock Determination of the merit factor of {L}egendre sequences.
\newblock {\em IEEE Trans.\ Inf.\ Theory}, {\bf 34}:161--164, 1988.

\bibitem{merit-survey}
J.\ Jedwab.
\newblock A survey of the merit factor problem for binary sequences.
\newblock In T.~Helleseth et~al., editors, {\em Sequences and Their
  Applications --- Proceedings of SETA 2004}, volume 3486 of {\em Lecture Notes
  in Computer Science}, pages 30--55. Springer-Verlag, Berlin Heidelberg, 2005.

\bibitem{barker-alternatives}
J.\ Jedwab.
\newblock What can be used instead of a {B}arker sequence?
\newblock {\em Contemp.\ Math.}, {\bf 461}:153--178, 2008.

\bibitem{jedwab-L4norm}
J.\ Jedwab.
\newblock The distribution of the {$L_4$} norm of {L}ittlewood polynomials.
\newblock {\tt arXiv:1911.11246v1}, November 2019.

\bibitem{jedwab-katz-schmidt-advances}
J.\ Jedwab, D.J.\ Katz, and K.-U.\ Schmidt.
\newblock Advances in the merit factor problem for binary sequences.
\newblock {\em J.\ Comb. Theory A}, {\bf 120}:882--906, 2013.

\bibitem{jedwab-katz-schmidt-6.34}
J.\ Jedwab, D.J.\ Katz, and K.-U.\ Schmidt.
\newblock Littlewood polynomials with small {$L^4$} norm.
\newblock {\em Adv.\ Math.}, {\bf 241}:127--136, 2013.

\bibitem{katz-ramirez}
D.J.\ Katz and M.E.\ Ramirez.
\newblock Moments of autocorrelation demerit factors of binary sequences.
\newblock {\tt arXiv:2307.14281v2}, February 2024.

\bibitem{littlewood-poly}
J.E.\ Littlewood.
\newblock On polynomials $\sum^n \pm z^m$, $\sum^n e^{{\alpha_m}i}z^m$, $z =
  e^{\theta i}$.
\newblock {\em J.\ London Math.\ Soc.}, {\bf 41}:367--376, 1966.

\bibitem{newman-byrnes}
D.J.\ Newman and J.S.\ Byrnes.
\newblock The ${L}^4$ norm of a polynomial with coefficients $\pm 1$.
\newblock {\em Amer.\ Math.\ Monthly}, {\bf 97}:42--45, 1990.

\bibitem{packebusch-mertens}
T.\ Packebusch and S.\ Mertens.
\newblock Low autocorrelation binary sequences.
\newblock {\em J.\ Phys. A: Math. Theor.}, {\bf 49}:165001, 2016.
\newblock (18 pages).

\bibitem{sarwate}
D.V.\ Sarwate.
\newblock Mean-square correlation of shift-register sequences.
\newblock {\em IEE Proceedings Part F}, {\bf 131}:101--106, 1984.

\bibitem{schmidt-smallcor}
K.-U.\ Schmidt.
\newblock Sequences with small correlation.
\newblock {\em Des. Codes Cryptogr.}, {\bf 78}:237--267, 2016.

\bibitem{schmidt-jedwab-parker-twofam}
K.-U.\ Schmidt, J.\ Jedwab, and M.G.\ Parker.
\newblock Two binary sequence families with large merit factor.
\newblock {\em Adv.\ Math.\ Commun.}, {\bf 3}:135--156, 2009.

\end{thebibliography}

\end{document}